\begin{document}

\newtheorem{theorem}{Theorem}    
\newtheorem{proposition}[theorem]{Proposition}
\newtheorem{conjecture}[theorem]{Conjecture}
\def\theconjecture{\unskip}
\newtheorem{corollary}[theorem]{Corollary}
\newtheorem{lemma}[theorem]{Lemma}
\newtheorem{sublemma}[theorem]{Sublemma}
\newtheorem{observation}[theorem]{Observation}
\newtheorem{remark}[theorem]{Remark}
\newtheorem{definition}[theorem]{Definition}
\theoremstyle{definition}
\newtheorem{notation}[theorem]{Notation}
\newtheorem{question}[theorem]{Question}
\newtheorem{questions}[theorem]{Questions}
\newtheorem{example}[theorem]{Example}
\newtheorem{problem}[theorem]{Problem}
\newtheorem{exercise}[theorem]{Exercise}

\numberwithin{theorem}{section} \numberwithin{theorem}{section}
\numberwithin{equation}{section}

\def\earrow{{\mathbf e}}
\def\rarrow{{\mathbf r}}
\def\uarrow{{\mathbf u}}
\def\varrow{{\mathbf V}}
\def\tpar{T_{\rm par}}
\def\apar{A_{\rm par}}

\def\reals{{\mathbb R}}
\def\torus{{\mathbb T}}
\def\heis{{\mathbb H}}
\def\integers{{\mathbb Z}}
\def\naturals{{\mathbb N}}
\def\complex{{\mathbb C}\/}
\def\distance{\operatorname{distance}\,}
\def\support{\operatorname{support}\,}
\def\dist{\operatorname{dist}\,}
\def\Span{\operatorname{span}\,}
\def\degree{\operatorname{degree}\,}
\def\kernel{\operatorname{kernel}\,}
\def\dim{\operatorname{dim}\,}
\def\codim{\operatorname{codim}}
\def\trace{\operatorname{trace\,}}
\def\Span{\operatorname{span}\,}
\def\dimension{\operatorname{dimension}\,}
\def\codimension{\operatorname{codimension}\,}
\def\nullspace{\scriptk}
\def\kernel{\operatorname{Ker}}
\def\ZZ{ {\mathbb Z} }
\def\p{\partial}
\def\rp{{ ^{-1} }}
\def\Re{\operatorname{Re\,} }
\def\Im{\operatorname{Im\,} }
\def\ov{\overline}
\def\eps{\varepsilon}
\def\lt{L^2}
\def\diver{\operatorname{div}}
\def\curl{\operatorname{curl}}
\def\etta{\eta}
\newcommand{\norm}[1]{ \|  #1 \|}
\def\expect{\mathbb E}
\def\bull{$\bullet$\ }
\def\C{\mathbb{C}}
\def\R{\mathbb{R}}
\def\Rn{{\mathbb{R}^n}}
\def\Sn{{{S}^{n-1}}}
\def\M{\mathbb{M}}
\def\N{\mathbb{N}}
\def\Q{{\mathbb{Q}}}
\def\Z{\mathbb{Z}}
\def\F{\mathcal{F}}
\def\L{\mathcal{L}}
\def\S{\mathcal{S}}
\def\supp{\operatorname{supp}}
\def\dist{\operatorname{dist}}
\def\essi{\operatornamewithlimits{ess\,inf}}
\def\esss{\operatornamewithlimits{ess\,sup}}
\def\xone{x_1}
\def\xtwo{x_2}
\def\xq{x_2+x_1^2}
\newcommand{\abr}[1]{ \langle  #1 \rangle}

\newcommand{\Norm}[1]{ \left\|  #1 \right\| }
\newcommand{\set}[1]{ \left\{ #1 \right\} }
\def\one{\mathbf 1}
\def\whole{\mathbf V}
\newcommand{\modulo}[2]{[#1]_{#2}}

\def\scriptf{{\mathcal F}}
\def\scriptg{{\mathcal G}}
\def\scriptm{{\mathcal M}}
\def\scriptb{{\mathcal B}}
\def\scriptc{{\mathcal C}}
\def\scriptt{{\mathcal T}}
\def\scripti{{\mathcal I}}
\def\scripte{{\mathcal E}}
\def\scriptv{{\mathcal V}}
\def\scriptw{{\mathcal W}}
\def\scriptu{{\mathcal U}}
\def\scriptS{{\mathcal S}}
\def\scripta{{\mathcal A}}
\def\scriptr{{\mathcal R}}
\def\scripto{{\mathcal O}}
\def\scripth{{\mathcal H}}
\def\scriptd{{\mathcal D}}
\def\scriptl{{\mathcal L}}
\def\scriptn{{\mathcal N}}
\def\scriptp{{\mathcal P}}
\def\scriptk{{\mathcal K}}
\def\frakv{{\mathfrak V}}

%
\newtheorem*{remark0}{\indent\sc Remark}
%
\renewcommand{\proofname}{\indent\sc Proof.} 

\title[Boundedness of intrinsic square functions and commutators]
{Boundedness of intrinsic square functions and commutators on generalized central Morrey spaces}
\subjclass[2010]{Primary 42B20; Secondary 42B25.}

%
\keywords{intrinsic square functions, commutators, $\lambda$-central $BMO$ functions, generalized central Morrey spaces.}
\thanks{The second author is the corresponding author.}
\thanks{The research was supported by the NNSF of China (No. 12061069.)}

\author[W. Lu]{Wenna Lu}
\address{Wenna Lu:
College of Mathematics and System Sciences\\ Xinjiang University\\
\"{U}r\"{u}mqi  830046\\ People's Republic of China}
\email{luwnmath@126.com}

\author[J. Zhou]{Jiang Zhou}
\address{Jiang Zhou: College of Mathematics and System Sciences\\ Xinjiang University\\
\"{U}r\"{u}mqi  830046\\ People's Republic of China}
\email{zhoujiang@xju.edu.cn}

\maketitle
\begin{abstract}
In this paper, the authors establish the boundedness for a large class of intrinsic square functions $\mathcal{G}_{\alpha}$, $g_{\alpha}$, $g^{\ast}_{\tilde{\lambda},\alpha}$ and their commutators $[b,\mathcal{G}_{\alpha}]$, $[b,g_{\alpha}]$ and $[b,g^{\ast}_{\tilde{\lambda},\alpha}]$  generated with $\lambda$-central $BMO$ functions $b\in CBMO^{p,\lambda}(\mathbb{R}^{n})$ on generalized central Morrey spaces $\mathcal{B}^{q,\varphi}(\mathbb{R}^{n})$ for $1<q<\infty,0<\alpha\leq1$, respectively. All of the results are new even on the central Morrey spaces $\mathcal{B}^{q,\lambda}(\mathbb{R}^{n})$.
\end{abstract}

\section{Introduction}

  Morrey in 1938, to study the local behavior of solutions to second-order elliptic partial differential equations, introduced the Morrey space in  \cite{M1}. It is well know that the space plays a significant role in studying the regularity of solutions to partial differential equations. Since then, many scholars have also considered the mapping properties of some classical operators in harmonic analysis on Morrey space (see \cite{CF},\cite{S}). The study on the intrinsic square function characterizations of function spaces has attracted a lot of attentions in recent years. To be precise, Wilson \cite{W1} first introduced intrinsic square functions to settle a conjecture proposed by Fefferman and Stein \cite{FS} on the boundedness of the Lusin-area function on the weighted Lebesgue space. Meanwhile, Wilson in \cite{W1} proved such operators are bounded from $L^{p}(\mathbb{R}^{n})$ to itself and also extended the weighted case. In 2012, Wang \cite{Wang1} obtained the boundedness of intrinsic square functions and their commutators generated by $BMO(\mathbb{R}^n)$ functions on weighted Morrey spaces. Later on, Wu and Zheng \cite{WZ} generalized these consequences to the generalized Morrey spaces. For more rich achievements and further developments in this subject, we refer the readers to \cite{Wang2,Wang3}, etc.

\section{Some Definitions and Notations}

Now, let us first recall the definitions of the intrinsic square functions (see \cite{W1}).

For $0<\alpha\leq1$, let $\mathcal{C}_{\alpha}$ be the family of functions $\phi:\mathbb{R}^{n}\rightarrow\mathbb{R}$ such that $\phi$ has support containing in  $\{x\in\mathbb{R}^{n}: |x|\leq1\}$, $\int_{\mathbb{R}^{n}}\phi(x)dx=0$, and for all $x,x'\in\mathbb{R}^n$
 $$|\phi(x)-\phi(x')|\leq|x-x'|^{\alpha}.$$
 For $(y,t)\in\mathbb{R}^{n+1}_{+}$ and $f\in L^1_{loc}(\mathbb{R}^{n})$, set
 $$\mathcal{A}_{\alpha}f(t,y)=\sup\limits_{\phi\in\mathcal{C}_{\alpha}}|f*{\phi}_{t}(y)|,$$
where ${\phi}_{t}(x)=\frac{1}{t^n}\phi(\frac{y}{t})$.

 The varying-aperture intrinsic square (intrinsic Lusin) function is defined by
 $$\mathcal{G}_{\alpha,\beta}(f)(x)=\Big(\int\int_{\Gamma_{\beta(x)}}\big(\mathcal{A}_{\alpha}f(t,y)\big)^2\frac{dydt}{t^{n+1}}\Big)^{1/2},$$
where
$$\Gamma_{\beta(x)}=\{(y,t)\in\mathbb{R}^{n+1}_{+}:|x-y|<\beta t\}.$$
If $\beta\equiv1$, we denote $\mathcal{G}_{\alpha,1}(f)$ by $\mathcal{G}_{\alpha}(f)$. Moreover, for any $0<\alpha\leq1$ and $\beta\geq1$, there is a pointwise relation between the function $\mathcal{G}_{\alpha,\beta}(f)(x)$ and $\mathcal{G}_{\alpha}(f)(x)$ as follows:
$$\mathcal{G}_{\alpha,\beta}(f)(x)\leq{\beta}^{\frac{3n}{2}+\alpha}\mathcal{G}_{\alpha}(f)(x).$$

A classical example is the kernel function ${\phi}_{t}(x)=P_t(x)$(Poisson kernel), We know that the intrinsic square functions are independent of any particular kernels, and it dominates pointwise the classical Lusin area integral and its some corresponding real-variable generations. On the other hand, we should pay attention to the fact that the function $\mathcal{G}_{\alpha,\beta}(f)(x)$ depends on the kernels with uniform compact support.

The intrinsic Littlewood-Paley $g$-function and the intrinsic $g_{\tilde{\lambda}}^{*}$-function are defined respectively by
$$g_{\alpha}f(x)=\Big(\int_{0}^{\infty}\big(\mathcal{A}_{\alpha}f(t,x)\big)^2\frac{dt}{t}\Big)^{1/2},$$
and
$$g_{\tilde{\lambda},\alpha}^{*}f(x)=\Big(\int\int_{\mathbb{R}^{n+1}_{+}}\Big(\frac{t}{t+|x-y|}\Big)^{n\tilde{\lambda}}\big(\mathcal{A}_{\alpha}f(t,y)\big)^2\frac{dydt}{t^{n+1}}\Big)^{1/2}.$$


Let $b$ be a locally integrable function on $\mathbb{R}^{n}$, in this paper, we also consider the commutators generated by the function $b$ and
the above intrinsic square functions, which are defined respectively by the following expressions (see\cite{Wang1}):
$$\mathcal{A}_{\alpha,b}f(t,y)=\sup\limits_{\phi\in\mathcal{C}_{\alpha}}\Big|\int_{\mathbb{R}^{n}}(b(y)-b(z)){\phi}_{t}(y-z)f(z)dz\Big|,$$
$$[b,\mathcal{G}_{\alpha}](f)(x)=\Big(\int\int_{\Gamma_{\beta(x)}}\big(\mathcal{A}_{\alpha,b}f(t,y)\big)^2\frac{dydt}{t^{n+1}}\Big)^{1/2},$$
$$[b,g_{\alpha}]f(x)=\Big(\int_{0}^{\infty}\big(\mathcal{A}_{\alpha,b}f(t,y)\big)^2\frac{dt}{t}\Big)^{1/2},$$
and
$$[b,g_{\tilde{\lambda},\alpha}^{*}]f(x)=\Big(\int\int_{\mathbb{R}^{n+1}_{+}}\Big(\frac{t}{t+|x-y|}\Big)^{n\tilde{\lambda}}\big(\mathcal{A}_{\alpha,b}f(t,y)\big)^2\frac{dydt}{t^{n+1}}\Big)^{1/2}.$$

Wiener \cite{WN1,WN2} explored a way to describe the behavior of a function at the infinity by considering the appropriate weighted $L^q(\mathbb{R}^{n})$ spaces, and then, Beurling \cite{B} employed this idea and obtained a pair of dual Banach spaces $A^q$ and $B^{q'}$, where $1/q+1/q'=1$. Subsequently, Lu and Yang \cite{LY1,LY2} introduced some new homogeneous Hardy space $H\dot{A}_q$ related to the homogeneous spaces $\dot{A}_q$, and obtained that the dual space of $H\dot{A}_q$ was the central bounded mean osciliation space $CBMO^q(\mathbb{R}^{n})$, which satisfies the following condition:
$$\|f\|_{CBMO^q(\mathbb{R}^{n})}=\sup\limits_{r>0}\Big(\frac{1}{|B(0,r)|}\int_{B(0,r)}|f(x)-f_{B(0,r)}|^qdx\Big)^{1/q}<\infty,$$
here and in the sequel, for $r>0$, $B(0,r)$ denote by the open ball centered at $0$ of radius $r$, $|B(0,r)|$ the Lebesgue measure of the ball $B(0,r)$ and
$$f_{B(0,r)}=\frac{1}{|B(0,r)|}\int_{B(0,r)}f(x)dx.$$

In fact, the space $CBMO^q(\mathbb{R}^{n})$ can be regarded as a local version of the space of bounded mean oscillation space $BMO(\mathbb{R}^{n})$. However, their properties have quite different, for example, the famous John-Nirenberg inequality for $BMO(\mathbb{R}^{n})$ space no longer holds in the $CBMO^q(\mathbb{R}^{n})$ space. In addition, Alvarez, Guzm\'{a}n-Partida and Lakey \cite{A} pointed out that $BMO(\mathbb{R}^{n})$ is strictly included in $\cap_{q>1}CBMO^q$. Furthermore, to study the relationship between central $BMO(\mathbb{R}^{n})$ spaces and Morrey spaces, they introduced $\lambda$-central bounded mean oscillation spaces and $\lambda$-central Morrey spaces, respectively.
\medskip

\quad\hspace{-22pt}{\bf Definition 2.1}(\cite{A}) {\it Let $\lambda<1/n$ and $1<q<\infty$. The $\lambda$-central bounded mean oscillation spaces
$CBMO^{q,\lambda}(\mathbb{R}^n)$ is defined as
$$CBMO^{q,\lambda}(\mathbb{R}^n)=\{f\in L_{loc}^q(\mathbb{R}^n):\|f\|_{CBMO^{q,\lambda}(\mathbb{R}^n)}<\infty\},$$
where
$$\|f\|_{CBMO^{q,\lambda}(\mathbb{R}^n)}=\sup\limits_{r>0}\Big(\frac{1}{|B(0,r)|^{1+\lambda q}}\int_{B(0,r)}|f(x)-f_{B(0,r)}|^qdx\Big)^{1/q}<\infty.$$
}

\quad\hspace{-22pt}{\bf Remark 2.1} {\it If two functions which differ by a constant are regarded as a function in the space $CBMO^{q,\lambda}(\mathbb{R}^n)$, then $CBMO^{q,\lambda}(\mathbb{R}^n)$ becomes a Banach space. when $\lambda=0$, the space  $CBMO^{q,\lambda}(\mathbb{R}^n)$ reduces to the space $CBMO^{q}(\mathbb{R}^n)$.
}

\quad\hspace{-22pt}{\bf Definition 2.2}(\cite{A}) {\it Let $\lambda\in\mathbb{R}$ and $1<q<\infty$. The $\lambda$-central Morrey space $\mathcal{B}^{q,\lambda}(\mathbb{R}^n)$ is defined by
$$\mathcal{B}^{q,\lambda}(\mathbb{R}^n)=\big\{f\in L_{loc}^q(\mathbb{R}^n): \|f\|_{\mathcal{B}^{q,\lambda}(\mathbb{R}^n)}<\infty\big\},$$
where
$$\|f\|_{\mathcal{B}^{q,\lambda}(\mathbb{R}^n)}=\sup\limits_{r>0}\Big(\frac{1}{|B(0,r)|^{1+\lambda q}}\int_{B(0,r)}|f(x)|^qdx\Big)^{1/q}.$$
}

\quad\hspace{-22pt}{\bf Remark 2.2} {\it $\mathcal{B}^{q,\lambda}(\mathbb{R}^n)$ space is a Banach space continuously included in $CBMO^{q,\lambda}(\mathbb{R}^n)$ space. When $\lambda<-1/q$, $\mathcal{B}^{q,\lambda}(\mathbb{R}^n)$ space reduces to $\{0\}$, and $\mathcal{B}^{q,-1/q}(\mathbb{R}^n)=L^q(\mathbb{R}^n)$.
}

\quad\hspace{-22pt}{\bf Remark 2.3} {\it(i) If $\lambda_1<\lambda_2$, it follows from the property of monotone functions that ${CBMO}^{q,\lambda_1}(\mathbb{R}^n)\subset{CBMO}^{q,\lambda_2}(\mathbb{R}^n)$ and $\mathcal{B}^{q,\lambda_1}(\mathbb{R}^n)\subset\mathcal{B}^{q,\lambda_2}(\mathbb{R}^n)$ for $1<q<\infty$.
}

{\it(ii) If $1<q_1<q_2<\infty$, ${CBMO}^{q_2,\lambda}(\mathbb{R}^n)\subset{CBMO}^{q_1,\lambda}(\mathbb{R}^n)$ for $\lambda<1/n$, and $\mathcal{B}^{q_2,\lambda}(\mathbb{R}^n)\subset\mathcal{B}^{q_1,\lambda}(\mathbb{R}^n)$ for $\lambda\in\mathbb{R}$.
}

In \cite{M2}, Mizuhara introduced the generalized Morrey spaces, and established the boundedness of some classical operators. After that, many authors have considered the mapping properties of various variant operators and their commutators on the generalized Morrey spaces. Naturally, the generalized central Morrey spaces, as a special of the local Morrey spaces, are also very significant function spaces in the study of boundedness of related operators, see for example \cite{Fan,FLL,SX,WZ,YT}, and references therein.

Next, we recall the definition of the generalized central Morrey space.

\quad\hspace{-22pt}{\bf Definition 2.3}(\cite{Fan}) {\it Let $\varphi(r)$ be a positive measurable function on $\mathbb{R}^+$, $\lambda\in\mathbb{R}$ and $1<q<\infty$. The generalized central Morrey space $\mathcal{B}^{q,\varphi}(\mathbb{R}^n)$ is defined by
$$\mathcal{B}^{q,\varphi}(\mathbb{R}^n)=\big\{f\in L_{loc}^q(\mathbb{R}^n): \|f\|_{\mathcal{B}^{q,\varphi}(\mathbb{R}^n)}<\infty\big\},$$
where
$$\|f\|_{\mathcal{B}^{q,\varphi}(\mathbb{R}^n)}=\sup\limits_{r>0}\frac{1}{\varphi(r)}\Big(\frac{1}{|B(0,r)|}\int_{B(0,r)}|f(x)|^qdx\Big)^{1/q}.$$
}

\quad\hspace{-22pt}{\bf Remark 2.4} {\it If $1<q_1<q_2<\infty$, $\mathcal{B}^{q_2,\varphi}(\mathbb{R}^n)\subset\mathcal{B}^{q_1,\varphi}(\mathbb{R}^n)$.
Note that if we take $\varphi(r)=r^{n\lambda}$, then $\mathcal{B}^{q,\varphi}(\mathbb{R}^n)=\mathcal{B}^{q,\lambda}(\mathbb{R}^n)$.
}

The main purpose in this paper is to establish the boundedenss of the operators $\mathcal{G}_{\alpha}$, $g_{\tilde{\lambda},\alpha}^{*}$, $g_{\alpha}$ and their commutators generated with the $\lambda$-central bounded mean oscillation function $b\in CBMO^{q,\lambda}(\mathbb{R}^n)$ on the generalized central Morrey spaces, respectively.

The rest of this paper is organized as follows. In Section 3 we establish main results of this paper and give several auxiliary lemmas, these lemmas are the important ingredients of this paper. The detailed proofs of Theorems \ref{th3.1}-\ref{th3.4} are given in Section 4. We would like to remark that some arguments are taken from \cite{Fan,WZ}.

As a rule, we use the symbol $f\lesssim g$ to denote there exists a positive constant $C$ such that $f\leq Cg $, and the notation $f\thickapprox g$ means that there exist positive constants $C_1, C_2$ such that $C_1 g\leq f\leq C_2g $. For any set $E\in\mathbb{R}^n$, ${\chi}_{E}$ denotes its characteristic function and $E^{c}$ denote its complementary set.

\section{ Main results and key lemmas}

In section, we first formulate the main results of this paper, and then state several key lemmas needed to prove these results.
\begin{theorem} \label{th3.1}
Let $1<q<\infty$, $0<\alpha\leq1$ and the pair $(\varphi_1,\varphi_2)$ satisfy the  condition
$$\int_{r}^{\infty}\frac{ess\inf_{t<\tau<\infty}\varphi_1(\tau){\tau}^{n/q}}{{t^{n/q+1}}}dt\lesssim\varphi_2(r).$$
Then the operator $\mathcal{G}_{\alpha}$ is bounded from $\mathcal{B}^{q,\varphi_1}(\mathbb{R}^n)$ to $\mathcal{B}^{q,\varphi_2}(\mathbb{R}^n)$.
\end{theorem}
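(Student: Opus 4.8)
The plan is to reduce the global estimate to a local one on balls and then sum dyadically, following the standard scheme for central Morrey spaces. Fix a ball $B=B(0,r)$. We split $f=f_1+f_2$ where $f_1=f\chi_{2B}$ and $f_2=f\chi_{(2B)^c}$. By sublinearity of $\mathcal{G}_\alpha$ it suffices to control $\|\mathcal{G}_\alpha f_1\|_{L^q(B)}$ and $\|\mathcal{G}_\alpha f_2\|_{L^q(B)}$ separately, divide by $|B|^{1/q}$, and take the supremum over $r>0$ against $\varphi_2(r)$.

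For the local part, I would invoke the known $L^q(\mathbb{R}^n)$-boundedness of $\mathcal{G}_\alpha$ (Wilson's theorem, $1<q<\infty$), which gives
$$\|\mathcal{G}_\alpha f_1\|_{L^q(B)}\le\|\mathcal{G}_\alpha f_1\|_{L^q(\mathbb{R}^n)}\lesssim\|f_1\|_{L^q(\mathbb{R}^n)}=\|f\|_{L^q(2B)}.$$
Then $\|f\|_{L^q(2B)}\lesssim|2B|^{1/q}\varphi_1(2r)\|f\|_{\mathcal{B}^{q,\varphi_1}}$, and one checks that $|B|^{-1/q}|2B|^{1/q}\varphi_1(2r)$ is dominated by a constant times $\varphi_2(r)$ using the integral condition on $(\varphi_1,\varphi_2)$ — concretely, because $\varphi_1(2r)\lesssim (2r)^{-n/q}\int_{2r}^{\infty}\tau^{-n/q-1}\,\mathrm{ess\,inf}_{\tau<s}\varphi_1(s)s^{n/q}\,d\tau$ is false in general, so instead I use that $\mathrm{ess\,inf}_{t<\tau<\infty}\varphi_1(\tau)\tau^{n/q}\le\varphi_1(t)t^{n/q}$ only in the wrong direction; the correct move is the elementary bound $\int_{2r}^\infty\frac{\mathrm{ess\,inf}_{t<\tau}\varphi_1(\tau)\tau^{n/q}}{t^{n/q+1}}\,dt\gtrsim \mathrm{ess\,inf}_{2r<\tau}\varphi_1(\tau)\tau^{n/q}\cdot r^{-n/q}$, and then absorb $f$ on the larger annuli into the same sum — which is exactly where the decomposition of $f_2$ feeds in.

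For the tail part $f_2$, the key pointwise estimate is that for $x\in B$,
$$\mathcal{G}_\alpha f_2(x)\lesssim\int_{(2B)^c}\frac{|f(y)|}{|y|^n}\,dy\approx\sum_{j=1}^\infty\frac{1}{|2^{j+1}B|}\int_{2^{j+1}B}|f(y)|\,dy\lesssim\sum_{j=1}^\infty\varphi_1(2^{j+1}r)\,\|f\|_{\mathcal{B}^{q,\varphi_1}},$$
where the first inequality comes from the compact support and cancellation of $\phi\in\mathcal{C}_\alpha$ together with the size condition $|\phi_t(y-z)|\lesssim t^{-n}\chi_{|y-z|\le t}$ and the fact that for $(y,t)\in\Gamma(x)$ with $z\notin 2B$ one needs $t\gtrsim|z|$. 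Then I convert the sum $\sum_j\varphi_1(2^{j+1}r)$ into the integral $\int_r^\infty\frac{\mathrm{ess\,inf}_{t<\tau<\infty}\varphi_1(\tau)\tau^{n/q}}{t^{n/q+1}}\,dt$ by comparing $\varphi_1(2^{j+1}r)$ with $\mathrm{ess\,inf}_{2^{j}r<\tau<2^{j+1}r}\varphi_1(\tau)$ up to the harmless power-of-two factors $\tau^{n/q}/t^{n/q}$; this is precisely the hypothesis, so $\mathcal{G}_\alpha f_2(x)\lesssim\varphi_2(r)\|f\|_{\mathcal{B}^{q,\varphi_1}}$ uniformly in $x\in B$, whence $|B|^{-1/q}\|\mathcal{G}_\alpha f_2\|_{L^q(B)}\lesssim\varphi_2(r)\|f\|_{\mathcal{B}^{q,\varphi_1}}$.

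The main obstacle — and the step deserving the most care — is the pointwise tail estimate $\mathcal{G}_\alpha f_2(x)\lesssim\int_{(2B)^c}|f(y)||y|^{-n}\,dy$: one must carefully track the constraint that the cone $\Gamma(x)$ and the support condition $|y-z|\le t$ force the integration variable $t$ to be bounded below, integrate $\int_{|z|}^\infty t^{-n-1}\,dt\approx|z|^{-n}$ (with the right power after accounting for the $dy\,dt/t^{n+1}$ measure and the $y$-integral over $|x-y|<t$), and handle the $L^2$-in-$(y,t)$ structure — the cancellation of $\phi$ is not even needed here, only the size bound. Once this pointwise bound is in hand, everything else is the routine dyadic summation recorded in the key lemmas, with the hypothesis on $(\varphi_1,\varphi_2)$ inserted verbatim. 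I expect the write-up to lean on an auxiliary lemma (stated in Section~3) that packages the inequality $\sum_{j}\frac{|f|_{L^1(2^{j+1}B)}}{|2^{j+1}B|}\lesssim\varphi_2(r)\|f\|_{\mathcal{B}^{q,\varphi_1}}$, reducing Theorem~\ref{th3.1} to a two-line combination of Wilson's $L^q$ bound and that lemma.
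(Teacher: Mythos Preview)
Your decomposition $f=f_1+f_2$ and the pointwise tail estimate $\mathcal{G}_\alpha f_2(x)\lesssim\int_{(2B)^c}|f(y)|\,|y|^{-n}\,dy$ match the paper exactly, and indeed the paper packages this as Lemma~\ref{le3.1}. The gap is in the second half: you replace $\|f\|_{L^q(B(0,t))}$ by $t^{n/q}\varphi_1(t)\|f\|_{\mathcal{B}^{q,\varphi_1}}$ too early and then try to bound $\sum_j\varphi_1(2^{j+1}r)$ (equivalently $\int_r^\infty\varphi_1(t)\,t^{-1}\,dt$) by $\varphi_2(r)$. But the hypothesis only bounds the \emph{smaller} quantity $\int_r^\infty t^{-n/q-1}\,\mathrm{ess\,inf}_{t<\tau}\varphi_1(\tau)\tau^{n/q}\,dt$; since $\mathrm{ess\,inf}_{t<\tau}\varphi_1(\tau)\tau^{n/q}\le\varphi_1(t)t^{n/q}$, the inequality goes the wrong way, as you yourself notice. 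Your proposed fix (``comparing $\varphi_1(2^{j+1}r)$ with $\mathrm{ess\,inf}_{2^jr<\tau<2^{j+1}r}\varphi_1(\tau)$'') does not repair this: without any regularity on $\varphi_1$ the pointwise value can exceed the essential infimum on an interval by an arbitrary factor. The same obstruction blocks your treatment of $f_1$, where you need $\varphi_1(2r)\lesssim\varphi_2(r)$.

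The paper avoids this by \emph{not} introducing $\varphi_1$ at this stage. It keeps the inequality in the form of Lemma~\ref{le3.1},
\[
|B|^{-1/q}\|\mathcal{G}_\alpha f\|_{L^q(B)}\lesssim\int_{2r}^\infty t^{-n/q-1}\|f\|_{L^q(B(0,t))}\,dt,
\]
and exploits that $t\mapsto\|f\|_{L^q(B(0,t))}$ is nondecreasing. After the substitution $t=s^{-q/n}$ the right-hand side becomes a Hardy average of a nonincreasing function, and the weighted Hardy inequality of Carro--Pick--Soria--Stepanov (reference~\cite{C}) says that
\[
\sup_{r>0}\varphi_2(r)^{-1}\int_{2r}^\infty t^{-n/q-1}g(t)\,dt\lesssim\sup_{r>0}\varphi_1(r)^{-1}r^{-n/q}g(r)
\]
for every nondecreasing $g\ge0$ \emph{precisely} when the ess\,inf condition on $(\varphi_1,\varphi_2)$ holds. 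Applying this with $g(t)=\|f\|_{L^q(B(0,t))}$ gives the theorem in one stroke and handles both $f_1$ and $f_2$ simultaneously. Your dyadic approach would succeed under the stronger Nakai-type hypothesis $\int_r^\infty\varphi_1(t)\,t^{-1}\,dt\lesssim\varphi_2(r)$, but not under the stated one.
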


\begin{theorem} \label{th3.2}
Let $1<q<\infty$, $0<\alpha\leq1$ and the pair $(\varphi_1,\varphi_2)$ satisfy the condition
$$\int_{r}^{\infty}\frac{ess\inf_{t<\tau<\infty}\varphi_1(\tau){\tau}^{n/q}}{{t^{n/q+1}}}dt\lesssim\varphi_2(r).$$
Then for $\tilde{\lambda}>3+\frac{2\alpha}{n}$, the operator $g_{\tilde{\lambda},\alpha}^{*}$ is bounded from $\mathcal{B}^{q,\varphi_1}(\mathbb{R}^n)$ to $\mathcal{B}^{q,\varphi_2}(\mathbb{R}^n)$.
\end{theorem}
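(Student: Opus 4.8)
The plan is to reduce Theorem \ref{th3.2} to Theorem \ref{th3.1} by means of a standard pointwise domination of $g_{\tilde{\lambda},\alpha}^{*}$ by the intrinsic Lusin function $\mathcal{G}_{\alpha}$. First, I would decompose the upper half-space $\mathbb{R}^{n+1}_{+}$ into the cone $\Gamma(x)=\Gamma_{1(x)}$ together with the dyadic annular regions
$$E_k(x)=\big\{(y,t)\in\mathbb{R}^{n+1}_{+}:2^{k-1}t\leq|x-y|<2^{k}t\big\},\qquad k\geq1.$$
On $E_k(x)$ one has $t+|x-y|\thickapprox 2^{k}t$, hence $\big(t/(t+|x-y|)\big)^{n\tilde{\lambda}}\lesssim 2^{-kn\tilde{\lambda}}$, while $E_k(x)\subset\Gamma_{2^{k}(x)}$. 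Splitting the defining integral of $g_{\tilde{\lambda},\alpha}^{*}f(x)$ over $\Gamma(x)$ and the $E_k(x)$, estimating each piece, and taking square roots gives the pointwise bound
$$g_{\tilde{\lambda},\alpha}^{*}(f)(x)\lesssim\sum_{k=0}^{\infty}2^{-kn\tilde{\lambda}/2}\,\mathcal{G}_{\alpha,2^{k}}(f)(x).$$

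Next I would invoke the pointwise relation $\mathcal{G}_{\alpha,\beta}(f)(x)\leq\beta^{3n/2+\alpha}\mathcal{G}_{\alpha}(f)(x)$ recalled in Section 2, applied with $\beta=2^{k}$, to obtain
$$g_{\tilde{\lambda},\alpha}^{*}(f)(x)\lesssim\Big(\sum_{k=0}^{\infty}2^{k(3n/2+\alpha-n\tilde{\lambda}/2)}\Big)\mathcal{G}_{\alpha}(f)(x).$$
The series on the right is a convergent geometric series precisely when $3n/2+\alpha-n\tilde{\lambda}/2<0$, i.e. when $\tilde{\lambda}>3+\frac{2\alpha}{n}$, which is exactly the hypothesis of the theorem; its sum is an absolute constant depending only on $n,\alpha,\tilde{\lambda}$. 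Hence $g_{\tilde{\lambda},\alpha}^{*}(f)(x)\lesssim\mathcal{G}_{\alpha}(f)(x)$ pointwise.

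Finally, since the norm of $\mathcal{B}^{q,\varphi}(\mathbb{R}^{n})$ is monotone with respect to pointwise domination of nonnegative functions, this pointwise inequality transfers directly to a norm estimate:
$$\|g_{\tilde{\lambda},\alpha}^{*}(f)\|_{\mathcal{B}^{q,\varphi_2}(\mathbb{R}^n)}\lesssim\|\mathcal{G}_{\alpha}(f)\|_{\mathcal{B}^{q,\varphi_2}(\mathbb{R}^n)}\lesssim\|f\|_{\mathcal{B}^{q,\varphi_1}(\mathbb{R}^n)},$$
where the last inequality is Theorem \ref{th3.1}, whose assumption on the pair $(\varphi_1,\varphi_2)$ coincides with the one imposed here. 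The only genuinely delicate step is the first one: one must justify the annular decomposition estimate uniformly in the supremum over $\phi\in\mathcal{C}_{\alpha}$ that defines $\mathcal{A}_{\alpha}f(t,y)$, and verify carefully that $E_k(x)\subset\Gamma_{2^{k}(x)}$ so that the aperture-$2^{k}$ intrinsic Lusin function genuinely appears; everything after that is bookkeeping with a geometric series. It is worth noting that the convergence condition $\tilde{\lambda}>3+\frac{2\alpha}{n}$ is exactly what this argument requires, so the hypothesis is sharp for this method.
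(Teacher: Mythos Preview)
Your proposal is correct and follows essentially the same route as the paper: the dyadic annular decomposition of $\mathbb{R}^{n+1}_{+}$, the bound $\bigl(t/(t+|x-y|)\bigr)^{n\tilde{\lambda}}\lesssim 2^{-kn\tilde{\lambda}}$ on the $k$-th annulus, and the aperture comparison $\mathcal{G}_{\alpha,2^{k}}\le 2^{k(3n/2+\alpha)}\mathcal{G}_{\alpha}$ are exactly what the paper uses. The only difference is cosmetic: the paper first passes to $L^{q}(B(0,r))$-norms via Lemma~\ref{le3.2} and then sums, whereas you sum the geometric series pointwise to obtain $g_{\tilde{\lambda},\alpha}^{*}(f)\lesssim\mathcal{G}_{\alpha}(f)$ directly and then invoke Theorem~\ref{th3.1} once; your version is slightly more streamlined but not a different argument.
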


\begin{theorem} \label{th3.3}
Let $1<q<\infty$, $0<\alpha\leq1$, $b\in{CBMO}^{p,\lambda}(\mathbb{R}^n)$, $0<\lambda<1/n$, $\frac{1}{q}=\frac{1}{q_1}+\frac{1}{p}$ and the pair $(\varphi_1,\varphi_2)$ satisfy the condition
$$\int_{r}^{\infty}\frac{ess\inf_{t<\tau<\infty}\varphi_1(\tau){\tau}^{n/q}}{{t^{n/q+1}}}dt\lesssim\varphi_2(r).$$
Then we have
$$\|[b,\mathcal{G}_{\alpha}](f)\|_{\mathcal{B}^{q,\varphi_2}(\mathbb{R}^n)}\lesssim\|b\|_{{CBMO}^{p,\lambda}(\mathbb{R}^n)}\|f\|_{\mathcal{B}^{q_1,\varphi_1}(\mathbb{R}^n)}.$$
\end{theorem}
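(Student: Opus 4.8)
\textbf{Proof proposal for Theorem \ref{th3.3}.}

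The plan is to reduce the estimate for the commutator $[b,\mathcal{G}_{\alpha}]$ on a fixed ball $B=B(0,r)$ to two building blocks: the $L^{q}\to L^{q}$ (indeed $L^{q_1}\to L^{q}$ after Hölder with the $CBMO$ factor) boundedness of $\mathcal{G}_{\alpha}$ itself, and a dyadic tail estimate exploiting the $\varphi_1,\varphi_2$ condition. First I would split $f=f_0+f_\infty$ with $f_0=f\chi_{2B}$ and $f_\infty=f\chi_{(2B)^c}$, and use the pointwise linearization $\mathcal{A}_{\alpha,b}f(t,y)\le |b(y)-b_{2B}|\,\mathcal{A}_{\alpha}f(t,y)+\mathcal{A}_{\alpha}\big((b-b_{2B})f\big)(t,y)$, so that $[b,\mathcal{G}_\alpha]f(x)\lesssim |b(x)-b_{2B}|\,\mathcal{G}_\alpha f(x)+\mathcal{G}_\alpha\big((b-b_{2B})f\big)(x)$. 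This produces four terms after also splitting $f$; call them $I_1,\dots,I_4$ according to whether $\mathcal{G}_\alpha$ hits the local or the global piece and whether the $|b(x)-b_{2B}|$ factor is present.

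For the local terms, I would apply Hölder's inequality on $B$ with exponents tied to $\tfrac1q=\tfrac1{q_1}+\tfrac1p$: the factor $|b(x)-b_{2B}|$ goes in $L^{p}(B)$, which is controlled by $|B|^{1/p}\,\|b\|_{CBMO^{p,\lambda}}\,(\text{a logarithmic/}|B|^{\lambda}\text{ gain})$ using the standard telescoping estimate $|b_{2^{k}B}-b_{2B}|\lesssim k\,|2^kB|^{\lambda}\|b\|_{CBMO^{p,\lambda}}$, and the remaining factor $\mathcal{G}_\alpha f_0$ (resp.\ $\mathcal{G}_\alpha((b-b_{2B})f_0)$) is estimated in $L^{q_1}$ (resp.\ $L^{q}$) via the known strong boundedness of $\mathcal{G}_\alpha$ — for the latter one needs $\mathcal{G}_\alpha:L^{q_1}\to L^{q_1}$ together with a further Hölder split of $(b-b_{2B})f_0$. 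This yields a bound by $\|b\|_{CBMO^{p,\lambda}}\,\varphi_1(2r)\,|B|^{1/q}$ times the $\mathcal{B}^{q_1,\varphi_1}$-norm of $f$, which is absorbed into $\varphi_2(r)$ since $\varphi_1(2r)\lesssim\varphi_2(r)$ is a consequence of the integral condition (take $t\in[r,2r]$ in the integral).

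For the global terms I would use the size estimate for $\mathcal{A}_\alpha g(t,y)$ coming from the $\mathcal{C}_\alpha$-normalization: for $x\in B$ and $z\in 2^{k+1}B\setminus 2^kB$ the kernel forces $t\gtrsim |x-z|\sim 2^k r$, giving the familiar bound $\mathcal{G}_\alpha g(x)\lesssim \sum_{k\ge1}(2^k r)^{-n}\int_{2^{k+1}B}|g(z)|\,dz$ (and similarly with the extra $|b(z)-b_{2B}|$ weight inside). Then Hölder on each annulus in the exponents $q_1$ and $p$, the $CBMO^{p,\lambda}$ control of $\|b-b_{2^{k+1}B}\|_{L^p(2^{k+1}B)}$ plus the $k\,2^{kn\lambda}$ telescoping term, and the definition of $\|f\|_{\mathcal{B}^{q_1,\varphi_1}}$ convert the sum into $\sum_{k}k\,2^{kn\lambda}\varphi_1(2^{k+1}r)\,(2^kr)^{-n/q'_1+\ldots}$; reindexing this dyadic sum as an integral and invoking the hypothesis $\int_r^\infty \operatorname*{ess\,inf}_{t<\tau<\infty}\varphi_1(\tau)\tau^{n/q}\,t^{-n/q-1}\,dt\lesssim\varphi_2(r)$ (the factor $k\,2^{kn\lambda}$ being harmless because $\lambda<1/n$ makes the geometric decay win) bounds everything by $\|b\|_{CBMO^{p,\lambda}}\,\varphi_2(r)\,\|f\|_{\mathcal{B}^{q_1,\varphi_1}}$. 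Dividing by $\varphi_2(r)|B|^{1/q}$ and taking the supremum over $r>0$ finishes the proof.

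The main obstacle I anticipate is the bookkeeping in the global terms: one must carefully track how the $k$-dependent telescoping constant $|b_{2^{k+1}B}-b_{2B}|\lesssim k\,|2^{k+1}B|^{\lambda}\|b\|_{CBMO^{p,\lambda}}$ interacts with the $\operatorname*{ess\,inf}$ appearing in the hypothesis, and verify that the dyadic sum is dominated by the integral in the stated condition rather than by something slightly larger — this is exactly where the constraint $0<\lambda<1/n$ and the splitting $\tfrac1q=\tfrac1{q_1}+\tfrac1p$ must be used in a coordinated way. The purely local estimates and the linearization are routine; it is this convergence-of-the-tail step, and matching it precisely to the $(\varphi_1,\varphi_2)$ integral condition, that requires care.
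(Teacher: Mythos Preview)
Your four-term decomposition and the pointwise/local estimates you sketch coincide with what the paper packages as Lemma~\ref{le3.3} (the paper centers at $b_B$ rather than $b_{2B}$ and writes a continuous integral rather than a dyadic sum, but these are cosmetic). The substantive divergence is in the final passage from the local $L^q$ bound to the $\mathcal{B}^{q,\varphi_2}$ norm. The paper does \emph{not} try to match the tail directly to the $(\varphi_1,\varphi_2)$ hypothesis; instead, once Lemma~\ref{le3.3} gives
\[
\|[b,\mathcal{G}_\alpha]f\|_{L^q(B(0,r))}\lesssim r^{n/q}\|b\|_{CBMO^{p,\lambda}}\int_{2r}^\infty t^{n\lambda}\Big(1+\ln\frac{t}{r}\Big)\|f\|_{L^{q_1}(B(0,t))}\,\frac{dt}{t^{n/q_1+1}},
\]
it repeats verbatim the argument of Theorem~\ref{th3.1}: the change of variables $t=s^{-q/n}$ turns the integral into a Hardy average, and then the weighted Hardy inequality of Carro--Pick--Soria--Stepanov, $\operatorname*{ess\,sup}_t\omega(t)\,\mathcal{H}g(t)\lesssim\operatorname*{ess\,sup}_t\nu(t)\,g(t)$ for non-increasing $g$, is invoked. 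That external lemma is precisely what translates the $\operatorname*{ess\,inf}_{t<\tau<\infty}\varphi_1(\tau)\tau^{n/q}$ in the hypothesis into a bound on the supremum.

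Your shortcut, ``reindex the dyadic sum as an integral and invoke the hypothesis'', is where the gap sits. If you plug in $\|f\|_{L^{q_1}(B(0,t))}\le\varphi_1(t)\,t^{n/q_1}\|f\|_{\mathcal{B}^{q_1,\varphi_1}}$ and sum, you face $\sum_k k\,2^{kn\lambda}\varphi_1(2^{k+1}r)$ (or its integral analogue), whereas the hypothesis only controls the integral of $\operatorname*{ess\,inf}_{t<\tau}\varphi_1(\tau)\tau^{n/q}$, which in general is strictly smaller than $\varphi_1(t)t^{n/q}$; the two are not interchangeable without an extra monotonicity assumption you have not made. Relatedly, the assertion that ``$\lambda<1/n$ makes the geometric decay win'' is not correct: there is no intrinsic geometric decay in the tail --- every bit of decay has to come from $\varphi_1$ through the stated condition, so the factor $k\,2^{kn\lambda}$ cannot be disposed of by a separate convergent series. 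The same remark applies to your claim that $\varphi_1(2r)\lesssim\varphi_2(r)$ follows by restricting the integral to $[r,2r]$: that only yields $\operatorname*{ess\,inf}_{\tau>r}\varphi_1(\tau)\tau^{n/q}\lesssim r^{n/q}\varphi_2(r)$, not control of $\varphi_1(2r)$ itself. In short, your outline reproduces Lemma~\ref{le3.3} but is missing the Hardy-operator step that the paper borrows from Theorem~\ref{th3.1}; supplying that (rather than a direct comparison) closes the argument.
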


\begin{theorem} \label{th3.4}
Let $1<q<\infty$, $0<\alpha\leq1$, $b\in{CBMO}^{p,\lambda}(\mathbb{R}^n)$, $0<\lambda<1/n$, $\frac{1}{q}=\frac{1}{q_1}+\frac{1}{p}$ and the pair $(\varphi_1,\varphi_2)$ satisfy the condition
$$\int_{r}^{\infty}\frac{ess\inf_{t<\tau<\infty}\varphi_1(\tau){\tau}^{n/q}}{{t^{n/q+1}}}dt\lesssim\varphi_2(r).$$
Then for $\tilde{\lambda}>3+\frac{2\alpha}{n}$, we have
$$\|[b,g_{\tilde{\lambda},\alpha}^{*}](f)\|_{\mathcal{B}^{q,\varphi_2}(\mathbb{R}^n)}\lesssim\|b\|_{{CBMO}^{p,\lambda}(\mathbb{R}^n)}\|f\|_{\mathcal{B}^{q_1,\varphi_1}(\mathbb{R}^n)}.$$
\end{theorem}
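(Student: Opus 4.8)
The plan is to deduce the result from Theorem \ref{th3.3} by a pointwise comparison of the commutator $[b,g^{\ast}_{\tilde{\lambda},\alpha}]$ with the commutator $[b,\mathcal{G}_{\alpha}]$ of the intrinsic Lusin--area function, valid exactly in the range $\tilde{\lambda}>3+\frac{2\alpha}{n}$; this is the commutator analogue of the passage from Theorem \ref{th3.1} to Theorem \ref{th3.2}. The first ingredient is the aperture--growth estimate for the commuted area function,
$$[b,\mathcal{G}_{\alpha,\beta}](f)(x)\lesssim\beta^{(3n/2)+\alpha}\,[b,\mathcal{G}_{\alpha}](f)(x),\qquad\beta\geq1,$$
the commutator version of the inequality $\mathcal{G}_{\alpha,\beta}(f)(x)\leq\beta^{(3n/2)+\alpha}\mathcal{G}_{\alpha}(f)(x)$ recorded in Section 2. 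One proves it by the same device: write $\mathcal{A}_{\alpha,b}f(t,y)=\sup_{\phi\in\mathcal{C}_{\alpha}}|((b(y)-b(\cdot))f)\ast\phi_{t}(y)|$ and, for $(y,t)\in\Gamma_{\beta(x)}$, rescale an arbitrary $\phi\in\mathcal{C}_{\alpha}$ so that the base point $(y,t)$ is replaced by a point on the vertical axis over $x$ at height comparable to $\beta t$; the dilated kernel again lies in $\mathcal{C}_{\alpha}$ after division by $(1+\beta)^{\alpha}$, which accounts for the power of $\beta$.

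Next comes the dyadic decomposition in the aperture. Splitting $\mathbb{R}^{n+1}_{+}$ into the cone $\Gamma_{1(x)}$ and the shells $E_{k}(x)=\{(y,t):2^{k-1}t\leq|x-y|<2^{k}t\}$, $k\geq1$, one has $\big(\frac{t}{t+|x-y|}\big)^{n\tilde{\lambda}}\lesssim2^{-kn\tilde{\lambda}}$ on $E_{k}(x)$ and $E_{k}(x)\subset\Gamma_{2^{k}(x)}$, so that
$$[b,g^{\ast}_{\tilde{\lambda},\alpha}](f)(x)^{2}\lesssim\sum_{k\geq0}2^{-kn\tilde{\lambda}}\,[b,\mathcal{G}_{\alpha,2^{k}}](f)(x)^{2}\lesssim\Big(\sum_{k\geq0}2^{-kn(\tilde{\lambda}-3-2\alpha/n)}\Big)\,[b,\mathcal{G}_{\alpha}](f)(x)^{2},$$
the last step using the aperture--growth estimate with $\beta=2^{k}$. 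The geometric series converges precisely because $\tilde{\lambda}>3+\frac{2\alpha}{n}$, which is exactly where that hypothesis enters, giving $[b,g^{\ast}_{\tilde{\lambda},\alpha}](f)(x)\lesssim[b,\mathcal{G}_{\alpha}](f)(x)$ a.e. Taking $\mathcal{B}^{q,\varphi_{2}}$--norms and invoking Theorem \ref{th3.3} (its hypotheses on $q,\alpha,b,p,\lambda$ and on the pair $(\varphi_{1},\varphi_{2})$ coincide with ours) then yields
$$\|[b,g^{\ast}_{\tilde{\lambda},\alpha}](f)\|_{\mathcal{B}^{q,\varphi_{2}}(\mathbb{R}^{n})}\lesssim\|[b,\mathcal{G}_{\alpha}](f)\|_{\mathcal{B}^{q,\varphi_{2}}(\mathbb{R}^{n})}\lesssim\|b\|_{CBMO^{p,\lambda}(\mathbb{R}^{n})}\|f\|_{\mathcal{B}^{q_{1},\varphi_{1}}(\mathbb{R}^{n})},$$
which is the assertion.

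The main obstacle is the aperture--growth estimate for the commutator itself. Unlike the plain intrinsic square functions, where a pure rescaling suffices, moving the base point from $y$ to $x$ forces the splitting $b(y)-b(z)=(b(y)-b(x))+(b(x)-b(z))$: the $b(x)-b(z)$ piece reproduces $[b,\mathcal{G}_{\alpha}](f)(x)$, but the $b(y)-b(x)$ piece contributes a term governed by the $L^{q}$--average of $b-b(x)$ over the wide cone, which is not pointwise dominated by $[b,\mathcal{G}_{\alpha}](f)(x)$. The honest way around this is to carry the dyadic aperture sum not as a global pointwise inequality but inside the local estimates underlying Theorem \ref{th3.3} --- where $f$ is decomposed as $f\chi_{2B}+f\chi_{(2B)^{c}}$ over balls $B=B(0,r)$ centred at the origin and the oscillation of $b$ is measured by the $CBMO^{p,\lambda}$--quantity $\frac{1}{|B|^{1+\lambda p}}\int_{B}|b-b_{B}|^{p}$, for which the extra factor $2^{k((3n/2)+\alpha)}$ is harmless since it is summed against $2^{-kn\tilde{\lambda}}$ and the series still converges for $\tilde{\lambda}>3+\frac{2\alpha}{n}$. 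Everything else --- the comparison of cones, the geometric series, and the transfer from a local estimate to the $\mathcal{B}^{q,\varphi_{2}}$--norm via the key lemmas of Section 3 --- is routine.
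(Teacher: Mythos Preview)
Your approach---dyadic decomposition in aperture, the aperture-growth comparison, and then Theorem~\ref{th3.3}---is exactly the paper's. But the ``main obstacle'' you raise in your last paragraph is not real: Wilson's proof of $\mathcal{G}_{\alpha,\beta}(f)(x)\le\beta^{3n/2+\alpha}\mathcal{G}_{\alpha}(f)(x)$ never moves the spatial base point from $y$ to $x$; it only rescales in $t$. Concretely, $\phi_t=(\phi^{(\beta)})_{\beta t}$ with $\phi^{(\beta)}(w)=\beta^{n}\phi(\beta w)$ and $\beta^{-n-\alpha}\phi^{(\beta)}\in\mathcal{C}_{\alpha}$, so $\mathcal{A}_{\alpha}f(t,y)\le\beta^{n+\alpha}\mathcal{A}_{\alpha}f(\beta t,y)$ at the \emph{same} $y$; the change of variables $s=\beta t$ in the cone integral then supplies the remaining factor $\beta^{n/2}$. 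Since $\mathcal{A}_{\alpha,b}f(t,y)=\mathcal{A}_{\alpha}\big((b(y)-b)f\big)(t,y)$ with the function $(b(y)-b)f$ held fixed once $y$ is fixed, the identical argument yields the pointwise bound $[b,\mathcal{G}_{\alpha,\beta}](f)(x)\le\beta^{3n/2+\alpha}[b,\mathcal{G}_{\alpha}](f)(x)$, with no need to split $b(y)-b(z)=(b(y)-b(x))+(b(x)-b(z))$. Your proposed workaround of carrying the aperture through the local estimates of Lemma~\ref{le3.3} would also succeed, but it is unnecessary.
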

In addition, the author in \cite{W1} proved the operators $\mathcal{G}_{\alpha}$ and $g_{\alpha}$ are pointwise comparable. Therefore, as applications of Theorems \ref{th3.1} and \ref{th3.3}, we have the following conclusions.

\quad\hspace{-22pt}{\bf Corollary 3.1.}{\it \;Let $1<q<\infty$, $0<\alpha\leq1$ and the pair $(\varphi_1,\varphi_2)$ satisfy the condition
$$\int_{r}^{\infty}\frac{ess\inf_{t<\tau<\infty}\varphi_1(\tau){\tau}^{n/q}}{{t^{n/q+1}}}dt\lesssim\varphi_2(r).$$
Then the operator $g_{\alpha}$ is bounded from $\mathcal{B}^{q,\varphi_1}(\mathbb{R}^n)$ to $\mathcal{B}^{q,\varphi_2}(\mathbb{R}^n)$.}

\quad\hspace{-22pt}{\bf Corollary 3.2.}{\it \;
Let $1<q<\infty$, $0<\alpha\leq1$, $b\in{CBMO}^{p,\lambda}(\mathbb{R}^n)$, $0<\lambda<1/n$, $\frac{1}{q}=\frac{1}{q_1}+\frac{1}{p}$ and the pair $(\varphi_1,\varphi_2)$ satisfy the condition
$$\int_{r}^{\infty}\frac{ess\inf_{t<\tau<\infty}\varphi_1(\tau){\tau}^{n/q}}{{t^{n/q+1}}}dt\lesssim\varphi_2(r).$$
Then we have
$$\|[b,g_{\alpha}](f)\|_{\mathcal{B}^{q,\varphi_2}(\mathbb{R}^n)}\lesssim\|b\|_{{CBMO}^{p,\lambda}(\mathbb{R}^n)}\|f\|_{\mathcal{B}^{q_1,\varphi_1}(\mathbb{R}^n)}.$$}

\quad\hspace{-22pt}{\bf Remark 3.1} {\it According to Remark 2,4, one can see that all of the results are new on the central Morrey spaces $\mathcal{B}^{q,\lambda}(\mathbb{R}^{n})$.
}

Next, we give some important lemmas, which will play significant roles in the process of proof.

\begin{lemma}\label{le3.1}
Let $1<q<\infty$ and $0<\alpha\leq1$, then the inequality
$$\|\mathcal{G}_{\alpha}(f)\|_{L^q(B(0,r))}\lesssim r^{n/q}\int_{2r}^{\infty}t^{-n/q-1}\|f\|_{L^q(B(0,t))}dt$$
holds for any ball $B(0,r)$ and for all $f\in L^q_{loc}(\mathbb{R}^n)$.
\end{lemma}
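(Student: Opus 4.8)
The plan is to split $f$ around the ball $B=B(0,r)$ as $f=f_1+f_2$ where $f_1=f\chi_{2B}$ and $f_2=f\chi_{(2B)^c}$, and estimate the contributions of $f_1$ and $f_2$ to $\|\mathcal{G}_\alpha(f)\|_{L^q(B)}$ separately. For the local part, I would invoke the known $L^q(\mathbb{R}^n)$-boundedness of the intrinsic Lusin function $\mathcal{G}_\alpha$ (Wilson \cite{W1}), which gives
$$\|\mathcal{G}_\alpha(f_1)\|_{L^q(B)}\le\|\mathcal{G}_\alpha(f_1)\|_{L^q(\mathbb{R}^n)}\lesssim\|f_1\|_{L^q(\mathbb{R}^n)}=\|f\|_{L^q(2B)}.$$
Then I would absorb this into the desired right-hand side by the elementary observation that $\|f\|_{L^q(2B)}\lesssim r^{n/q}\int_{2r}^\infty t^{-n/q-1}\|f\|_{L^q(B(0,t))}\,dt$, since $\|f\|_{L^q(B(0,t))}\ge\|f\|_{L^q(2B)}$ for $t\ge 2r$ and $\int_{2r}^\infty t^{-n/q-1}\,dt\thickapprox r^{-n/q}$.

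The substantive part is the estimate for $f_2$, which is supported away from $2B$. For $x\in B$ and $(y,t)\in\Gamma(x)$ (so $|x-y|<t$), the function $\phi_t(y-\cdot)$ is supported in the ball $\{z:|y-z|\le t\}\subset\{z:|x-z|<2t\}$. Since this must meet $(2B)^c$, one needs $2t\gtrsim r$, i.e. $t\gtrsim r$; more precisely the relevant $z$ satisfy $|z|\gtrsim |x-z|\gtrsim$ something comparable to $t$ when $t$ is large, and $|z|\ge 2r$ always. The key pointwise bound is
$$\mathcal{A}_\alpha f_2(t,y)=\sup_{\phi\in\mathcal{C}_\alpha}\Big|\int_{\mathbb{R}^n}\phi_t(y-z)f_2(z)\,dz\Big|\le t^{-n}\int_{\{|y-z|\le t\}\cap(2B)^c}|f(z)|\,dz,$$
using $\|\phi\|_\infty\le 1$ on its support (which follows from the Hölder condition and mean-zero, or can simply be assumed as part of the normalization of $\mathcal{C}_\alpha$). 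By Hölder's inequality in $z$ this is $\lesssim t^{-n}\cdot t^{n/q'}\|f\|_{L^q(B(0,Ct))}=t^{-n/q}\|f\|_{L^q(B(0,Ct))}$ for an appropriate constant $C$. Crucially this bound is independent of $y$ (given $t$), so squaring and integrating $dy\,dt/t^{n+1}$ over $\Gamma(x)$: the $dy$-integral over $\{|x-y|<t\}$ contributes a factor $\thickapprox t^n$, leaving $\int_{r}^{\infty}\big(t^{-n/q}\|f\|_{L^q(B(0,Ct))}\big)^2\,\frac{dt}{t}$, and taking the square root gives a pointwise bound on $\mathcal{G}_\alpha(f_2)(x)$ by $\big(\int_r^\infty t^{-2n/q}\|f\|_{L^q(B(0,Ct))}^2\,t^{-1}\,dt\big)^{1/2}$ which is independent of $x\in B$.

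To finish, I would pass from the $\ell^2$-type integral to the $\ell^1$-type integral in the statement. One clean way: by Minkowski's integral inequality (or simply by the fact that $\|\cdot\|_{L^2(dt/t)}$-type norms dominate $\|\cdot\|_{L^\infty}$ pointwise and a dyadic decomposition),
$$\Big(\int_r^\infty \big(t^{-n/q}\|f\|_{L^q(B(0,Ct))}\big)^2\,\frac{dt}{t}\Big)^{1/2}\lesssim\int_r^\infty t^{-n/q}\|f\|_{L^q(B(0,Ct))}\,\frac{dt}{t}\thickapprox\int_{2r}^\infty s^{-n/q-1}\|f\|_{L^q(B(0,s))}\,ds$$
after the substitution $s=Ct$ and adjusting constants (using that $C$ is comparable to $1$ and that changing the lower limit from $r$ to $2r$ only affects constants, since $\|f\|_{L^q(B(0,s))}$ is monotone in $s$). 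Multiplying by $|B(0,r)|^{1/q}\thickapprox r^{n/q}$ after taking $L^q(B)$-norms (the integrand being constant in $x$) yields exactly the claimed inequality. The main obstacle I anticipate is the careful bookkeeping of the geometric constraint relating $t$, $|x-y|$, and the support of $\phi_t(y-\cdot)$ versus $(2B)^c$ — i.e. justifying cleanly that only $t\gtrsim r$ contributes and that the $z$-integral lands in $B(0,Ct)$ with a controlled $C$ — together with the passage from the square-function ($L^2(dt/t)$) norm to the linear integral, which should be handled by Minkowski's inequality rather than any deeper tool.
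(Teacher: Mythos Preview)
Your overall architecture matches the paper's exactly: split $f=f_1+f_2$ at $2B$, use Wilson's $L^q$ boundedness for $f_1$ and absorb $\|f\|_{L^q(2B)}$ into the tail integral via monotonicity, then handle $f_2$ by the pointwise kernel bound $|\phi_t*f_2(y)|\lesssim t^{-n}\int_{|y-z|\le t}|f_2(z)|\,dz$ together with the geometric observation that $t\gtrsim r$ is forced.

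Where you diverge is in the order of operations on $f_2$. The paper applies Minkowski's integral inequality \emph{in the $z$-variable} to pull the $z$-integral outside the $L^2(dt)$ norm, obtaining directly
\[
\mathcal{G}_\alpha(f_2)(x)\lesssim \int_{(2B)^c}|f(z)|\Big(\int_{t>|x-z|/2}t^{-2n-1}\,dt\Big)^{1/2}dz\approx \int_{(2B)^c}\frac{|f(z)|}{|x-z|^n}\,dz,
\]
after which Fubini and H\"older convert this cleanly to $\int_{2r}^\infty t^{-n/q-1}\|f\|_{L^q(B(0,t))}\,dt$. You instead apply H\"older in $z$ first, arriving at an $L^2(dt/t)$ expression, and then need
\[
\Big(\int_r^\infty \big(t^{-n/q}\|f\|_{L^q(B(0,Ct))}\big)^2\,\frac{dt}{t}\Big)^{1/2}\lesssim \int_r^\infty t^{-n/q}\|f\|_{L^q(B(0,Ct))}\,\frac{dt}{t}.
\]
This step is \emph{not} Minkowski's integral inequality---there is nothing for Minkowski to interchange here---and your parenthetical ``$L^2$-type norms dominate $L^\infty$'' is backwards. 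What actually makes the displayed inequality true is the dyadic argument you allude to: since $t\mapsto\|f\|_{L^q(B(0,Ct))}$ is nondecreasing and $t^{-n/q}$ varies by a bounded factor on each dyadic block $[2^kr,2^{k+1}r]$, the integrand is essentially constant on dyadic blocks, and then $\ell^2\le\ell^1$ on the block sums gives the bound with a constant depending only on $n,q$. So your route is valid, but you should replace the misnamed ``Minkowski'' justification with the dyadic $\ell^2\hookrightarrow\ell^1$ argument spelled out, or simply adopt the paper's order (Minkowski in $z$ first), which sidesteps the issue entirely.
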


\begin{lemma}\label{le3.2}
Let $1<q<\infty$ and $0<\alpha\leq1$, then for any ball $B(0,r)$ and for all $f\in L^q_{loc}(\mathbb{R}^n)$, the operator
$$\mathcal{G}_{\alpha,2^{j}}(f)(x)=\Big(\int_{0}^{\infty}\int_{|x-y|\leq2^{j}t}\big(\mathcal{A}_{\alpha}f(t,y)\big)^2\frac{dydt}{t^{n+1}}\Big)^{1/2}, \quad j\in\mathbb{Z}^+$$
satisfies the following inequality
$$\|\mathcal{G}_{\alpha,2^{j}}(f)\|_{L^q(B(0,r))}\lesssim 2^{j(3+\frac{2\alpha}{n})}r^{n/q}\int_{2r}^{\infty}t^{-n/q-1}\|f\|_{L^q(B(0,t))}dt.$$
\end{lemma}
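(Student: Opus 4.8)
\textbf{Proof plan for Lemma \ref{le3.2}.}

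The plan is to reduce the estimate for $\mathcal{G}_{\alpha,2^{j}}$ to the estimate for $\mathcal{G}_{\alpha}=\mathcal{G}_{\alpha,1}$ already recorded in Lemma \ref{le3.1}, but keeping track of the dependence on the aperture $2^{j}$. First I would invoke the pointwise aperture-change inequality stated in Section 2, namely $\mathcal{G}_{\alpha,\beta}(f)(x)\le\beta^{\frac{3n}{2}+\alpha}\mathcal{G}_{\alpha}(f)(x)$, specialized to $\beta=2^{j}$. This immediately gives
$$\|\mathcal{G}_{\alpha,2^{j}}(f)\|_{L^q(B(0,r))}\le 2^{j(\frac{3n}{2}+\alpha)}\|\mathcal{G}_{\alpha}(f)\|_{L^q(B(0,r))}.$$
Combining this with Lemma \ref{le3.1} yields
$$\|\mathcal{G}_{\alpha,2^{j}}(f)\|_{L^q(B(0,r))}\lesssim 2^{j(\frac{3n}{2}+\alpha)} r^{n/q}\int_{2r}^{\infty}t^{-n/q-1}\|f\|_{L^q(B(0,t))}\,dt,$$
and since $\frac{3n}{2}+\alpha\le n\bigl(\frac32+\frac{\alpha}{n}\bigr)$... — here is the one subtlety: the stated exponent is $2^{j(3+\frac{2\alpha}{n})}$, which equals $2^{\frac{j}{n}(3n+2\alpha)}=\bigl(2^{j}\bigr)^{3+\frac{2\alpha}{n}}$, whereas the naive bound gives the exponent $\frac{3n}{2}+\alpha$; for $n\ge1$ these are compatible only after checking $\frac{3n}{2}+\alpha\le 3n+2\alpha$, which is trivially true. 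So the constant $2^{j(3+\frac{2\alpha}{n})}$ is in fact weaker (larger, since $j\ge0$) than what the pointwise inequality already supplies when $n\ge2$; one simply absorbs the difference. For $n=1$ the two match up to the factor on $\alpha$. Hence no new analysis beyond a bookkeeping comparison of exponents is needed.

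Alternatively, if one prefers a self-contained argument that does not cite the aperture-change inequality, I would repeat the proof of Lemma \ref{le3.1} directly for $\mathcal{G}_{\alpha,2^{j}}$. That proof proceeds by the standard decomposition $f=f\chi_{B(0,2r)}+f\chi_{B(0,2r)^c}=:f_1+f_2$. For the local part $f_1$, one uses the $L^q$-boundedness of $\mathcal{G}_{\alpha,2^{j}}$ on $\mathbb{R}^n$ — which follows from the $L^q$-boundedness of $\mathcal{G}_{\alpha}$ together with the pointwise aperture inequality, contributing the factor $2^{j(\frac{3n}{2}+\alpha)}$ — and then estimates $\|f_1\|_{L^q}=\|f\|_{L^q(B(0,2r))}\lesssim r^{n/q}\int_{2r}^{\infty}t^{-n/q-1}\|f\|_{L^q(B(0,t))}\,dt$ by noting that $\int_{2r}^{\infty}t^{-n/q-1}\,dt\approx r^{-n/q}$ and monotonicity of $t\mapsto\|f\|_{L^q(B(0,t))}$. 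For the tail part $f_2$, one estimates $\mathcal{A}_{\alpha}f_2(t,y)$ pointwise on the cone region $\{|x-y|\le 2^{j}t\}$ with $x\in B(0,r)$: the support condition on $\phi$ forces $|y-z|\le t$, so $|x-z|\le|x-y|+|y-z|\le(2^{j}+1)t$, and since $z\notin B(0,2r)$ one gets $t\gtrsim 2^{-j}r$; using $\int_{\mathbb{R}^n}\phi=0$ and the Hölder regularity of $\phi$ one bounds $|f*\phi_t(y)|$ by an average of $|f|$ over a ball of radius $\approx 2^{j}t$, leading after integration in $(y,t)$ over the truncated cone to a bound of the asserted form with a power of $2^{j}$.

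The main obstacle — really the only point requiring care — is the correct tracking of the power of $2^{j}$ through the cone integration in the tail estimate: the region $\{(y,t):|x-y|\le 2^{j}t\}$ has $y$-slices of measure $\approx(2^{j}t)^n$, which contributes $2^{jn}$, and the lower cutoff $t\gtrsim 2^{-j}r$ interacts with the $t$-integration to produce further powers of $2^{j}$; the Hölder-continuity gain $|\phi(x)-\phi(x')|\le|x-x'|^{\alpha}$ contributes the $2^{j\alpha}$-type factor after rescaling by $t$. Summing these exponents must reproduce (or be dominated by) $2^{j(3+\frac{2\alpha}{n})}$. Given that Lemma \ref{le3.1} and the pointwise aperture bound are both already available, I would present the short route: cite $\mathcal{G}_{\alpha,2^{j}}(f)(x)\le 2^{j(\frac{3n}{2}+\alpha)}\mathcal{G}_{\alpha}(f)(x)$, apply Lemma \ref{le3.1}, and observe $2^{j(\frac{3n}{2}+\alpha)}\le 2^{j(3+\frac{2\alpha}{n})}$ when $n\ge1$ (indeed $\frac{3n}{2}+\alpha\le 3n+2\alpha$ for all $n\ge1$, $0<\alpha\le1$, and one divides by $n$ only in the exponent bookkeeping of the statement), completing the proof.
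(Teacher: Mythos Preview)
Your approach is exactly the paper's: the proof there is two lines---cite the pointwise aperture inequality with $\beta=2^{j}$ and apply Lemma~\ref{le3.1}. No decomposition $f=f_1+f_2$ is redone.

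One genuine slip in your bookkeeping: you claim $2^{j(\frac{3n}{2}+\alpha)}\le 2^{j(3+\frac{2\alpha}{n})}$, justifying it by checking $\frac{3n}{2}+\alpha\le 3n+2\alpha$. But the exponents to compare are $\frac{3n}{2}+\alpha$ and $3+\frac{2\alpha}{n}$ (not $3n+2\alpha$), and for $n\ge 3$ one has $\frac{3n}{2}+\alpha>3+\frac{2\alpha}{n}$, so your absorption fails in those dimensions. The paper avoids this by simply writing the aperture inequality in the proof as $\mathcal{G}_{\alpha,\beta}(f)(x)\le\beta^{3+\frac{2\alpha}{n}}\mathcal{G}_{\alpha}(f)(x)$---with a different exponent than the $\beta^{\frac{3n}{2}+\alpha}$ stated earlier in Section~2. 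This is an internal inconsistency in the paper, not a flaw in your reasoning; both forms circulate in the literature on intrinsic square functions. For the lemma as stated you should cite the version with exponent $3+\frac{2\alpha}{n}$ directly (as the paper does in its proof) rather than try to bound one by the other.
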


\begin{lemma}\label{le3.3}
Let $1<q<\infty$, $0<\alpha\leq1$, $b\in{CBMO}^{p,\lambda}(\mathbb{R}^n)$, $0<\lambda<1/n$, $\frac{1}{q}=\frac{1}{q_1}+\frac{1}{p}$, then the inequality
$$\|[b,\mathcal{G}_{\alpha}](f)\|_{L^q(B(0,r))}\lesssim r^{n/q}\|b\|_{{CBMO}^{p,\lambda}(\mathbb{R}^n)}\int_{2r}^{\infty}t^{n\lambda}\big(1+\ln\frac{t}{r}\big)\|f\|_{L^{q_1}(B(0,t))}\frac{dt}{t^{n/q_1+1}}$$
holds for any ball $B(0,r)$ and for all $f\in L^{q_1}_{loc}(\mathbb{R}^n)$.
\end{lemma}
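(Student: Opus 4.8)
\textbf{Proof proposal for Lemma \ref{le3.3}.}

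The plan is to follow the familiar ``local part versus tail part'' dyadic decomposition that underlies Lemmas \ref{le3.1} and \ref{le3.2}, now carrying along the extra factor $b(y)-b(z)$ from the commutator. Fix a ball $B:=B(0,r)$. Write $f=f_1+f_2$ where $f_1=f\chi_{2B}$ and $f_2=f\chi_{(2B)^c}$, and correspondingly bound $\|[b,\mathcal{G}_{\alpha}](f)\|_{L^q(B)}$ by the sum of the two pieces. For the local piece, introduce the constant $b_{2B}$ and split $b(y)-b(z)=(b(y)-b_{2B})-(b(z)-b_{2B})$; the first term pulls out $|b(y)-b_{2B}|$ and is handled by H\"older's inequality (with exponents coming from $\frac1q=\frac1{q_1}+\frac1p$), the $L^{q_1}\to L^{q_1}$ boundedness of $\mathcal{G}_\alpha$ on $\reals^n$ (Wilson), and the $CBMO^{p,\lambda}$ estimate for $b$ on $2B$, while the second term is absorbed into $\mathcal{G}_\alpha((b-b_{2B})f_1)$ and estimated the same way. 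This produces a term of size $r^{n/q}\,\|b\|_{CBMO^{p,\lambda}}\, r^{n\lambda}\,\|f\|_{L^{q_1}(2B)}/r^{n/q_1}$, which is dominated by the right-hand side integral restricted to $t\in(2r,4r)$ (using that the integrand is $\gtrsim t^{n\lambda}\|f\|_{L^{q_1}(B(0,t))}/t^{n/q_1+1}$ there, with the logarithm $\geq\ln 2$).

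For the tail piece I would estimate $\mathcal{A}_{\alpha,b}f_2(t,y)$ pointwise for $(y,t)\in\Gamma_{\beta(x)}$, $x\in B$. Since $\phi_t$ is supported in $|y-z|\leq t$ and has vanishing integral, the usual $\mathcal{C}_\alpha$ smoothness/cancellation argument gives, for $z\in(2B)^c$, a bound like $\mathcal{A}_{\alpha,b}f_2(t,y)\lesssim \sum_{j\geq 1}\frac{1}{|2^{j}B|}\int_{2^{j+1}B\setminus 2^{j}B}|b(y)-b(z)|\,|f(z)|\,dz$, after summing the dyadic shells that can meet the support. One then splits $|b(y)-b(z)|\leq |b(y)-b_{2^{j+1}B}|+|b_{2^{j+1}B}-b_{2B}|+|b_{2B}-b(y)|+|b(z)-b_{2^{j+1}B}|$; the cross-ball averages $|b_{2^{j+1}B}-b_{2B}|$ contribute the $(1+\ln\frac{t}{r})$-type logarithmic factor after summation (this is the standard $CBMO$ telescoping estimate, where $\lambda>0$ forces the geometric tail $\sum 2^{-jn(1/n-\lambda)}$ to converge but leaves behind $j\approx\ln(t/r)$ many comparable terms), and the remaining terms are controlled by H\"older's inequality on each shell together with $\|b-b_{2^{j+1}B}\|_{L^p(2^{j+1}B)}\lesssim \|b\|_{CBMO^{p,\lambda}}|2^{j+1}B|^{1/p+\lambda}$. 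Integrating the resulting pointwise bound over $\Gamma_{\beta(x)}$ in $(y,t)$ reproduces the $\mathcal{G}_\alpha$-type $dt\,dy/t^{n+1}$ integral and converts the dyadic sum into the claimed integral $\int_{2r}^\infty t^{n\lambda}(1+\ln\frac tr)\|f\|_{L^{q_1}(B(0,t))}\,\frac{dt}{t^{n/q_1+1}}$; finally taking the $L^q(B)$ norm in $x$ contributes the prefactor $r^{n/q}$ (the $x$-dependence having been absorbed into the cone, exactly as in Lemma \ref{le3.1}).

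The main obstacle I anticipate is the bookkeeping in the tail estimate: one must simultaneously (i) control the aperture of the cone $\Gamma_{\beta(x)}$ uniformly in $x\in B$ while keeping track of which dyadic annuli $2^{j+1}B\setminus 2^jB$ can intersect $\{|z-y|\leq t\}$, (ii) extract the logarithmic factor cleanly from the telescoping of $b_{2^{j+1}B}-b_{2B}$ without losing the convergence of the geometric series (this is exactly where $0<\lambda<1/n$ is used, and why one cannot simply drop the $\ln$), and (iii) apply H\"older with the three exponents $q,q_1,p$ consistently on each shell. All three are technical rather than conceptual, and the argument is parallel to the proof of Lemma \ref{le3.1} with the $BMO$-to-$CBMO^{p,\lambda}$ modifications carried through; I would organize the write-up so that the purely $\mathcal{G}_\alpha$ estimates are quoted from (the proof of) Lemma \ref{le3.1} and only the $b$-dependent factors are tracked in detail.
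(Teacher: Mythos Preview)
Your outline is correct and the local piece matches the paper exactly, but the tail piece is organized differently. The paper does not work with $\mathcal{A}_{\alpha,b}f_2$ and dyadic shells at all; instead it first splits the whole commutator into four sublinear pieces,
\[
(b(x)-b_B)\mathcal{G}_\alpha(f_1),\quad \mathcal{G}_\alpha((b-b_B)f_1),\quad (b(x)-b_B)\mathcal{G}_\alpha(f_2),\quad \mathcal{G}_\alpha((b-b_B)f_2),
\]
and then for the last two simply quotes the pointwise tail estimate already obtained in the proof of Lemma~\ref{le3.1}, namely $\mathcal{G}_\alpha(g\chi_{(2B)^c})(x)\lesssim\int_{|z|>2r}|g(z)|\,|z|^{-n}\,dz$, followed by the Fubini identity $|z|^{-n}\approx\int_{|z|}^\infty t^{-n-1}\,dt$ to pass to the continuous integral in $t$. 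This avoids the shell bookkeeping entirely: no cancellation or H\"older regularity of $\phi\in\mathcal{C}_\alpha$ is invoked, only $\|\phi\|_\infty\lesssim1$. The logarithm then appears from the single estimate $|b_{B(0,t)}-b_{B(0,r)}|\lesssim t^{n\lambda}(1+\ln\tfrac{t}{r})\|b\|_{CBMO^{p,\lambda}}$, which for $\lambda>0$ is in fact slack (the telescoping sum is dominated by its last term), so your remark that ``one cannot simply drop the $\ln$'' is not quite right here --- the $(1+\ln\tfrac{t}{r})$ is kept only as a harmless upper bound consistent with the $\lambda=0$ case. Your dyadic-shell route would also work, but the paper's four-term split is shorter because it reduces the tail to a direct reuse of Lemma~\ref{le3.1}.
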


\section{Proofs of main results }

In section, we first establish Lemmas \ref{le3.1}-\ref{le3.3}.

\begin{proof}[Proof of Lemma \ref{le3.1}]
For any $r>0$, set $B=B(0,r)$ and $2B=B(0,2r)$, we write
$$f(x)=f(x){\chi}_{2B}(x)+f(x){\chi}_{(2B)^{c}}(x):=f_1(x)+f_2(x)$$
and have
\begin{align*}
\|\mathcal{G}_{\alpha}(f)\|_{L^q(B)}&\leq\|\mathcal{G}_{\alpha}(f_1)\|_{L^q(B)}+\|\mathcal{G}_{\alpha}(f_2)\|_{L^q(B)}\\
&:=I_1+I_2.
\end{align*}
For $I_1$, since $f_1\in L^q(\mathbb{R}^n)$, $\mathcal{G}_{\alpha}(f_1)\in{L^q(\mathbb{R}^n)}$, by using the boundedness of $\mathcal{G}_{\alpha}$ on $L^q(\mathbb{R}^n)$ in  \cite{W1},  we get
\begin{align*}
I_1=\|\mathcal{G}_{\alpha}(f_1)\|_{L^q(B)}\leq\|\mathcal{G}_{\alpha}(f_1)\|_{L^q(\mathbb{R}^n)}&\lesssim\|f_1\|_{L^q(\mathbb{R}^n)}\\
&\lesssim r^{n/q}\int_{2r}^{\infty}t^{-n/q-1}\|f\|_{L^q(B(0,t))}dt.
\end{align*}
We now estimate $I_2$, note that the fact $\|\phi\|_{L^{\infty}}\lesssim1$, we obtain that
  $$|f_2*{\phi}_{t}(y)|=\Big|\frac{1}{t^{n}}\int_{|y-z|\leq t}\phi(\frac{y-z}{t})f_{2}(z)dz\Big|\lesssim\frac{1}{t^{n}}\int_{|y-z|\leq t}|f_{2}(z)|dz.$$
It is clear that $x\in B$, $(y,t)\in \Gamma(x)$, and $z\in{(2B)^{c}}$, which deduce that
$$r\leq|z|-|x|\leq|x-z|\leq|x-y|+|y-z|\leq2t.$$
From this, we get
\begin{align*}
\mathcal{G}_{\alpha}(f_2)(x)&\lesssim\Big(\int\int_{\Gamma(x)}\Big(\frac{1}{t^{n}}\int_{|y-z|\leq t}|f_{2}(z)|dz\Big)^2\frac{dydt}{t^{n+1}}\Big)^{1/2}\\
&\leq\Big(\int_{t>\frac{r}{2}}\int_{|x-y|<t}\Big(\int_{|z-x|\leq2t}|f_{2}(z)|dz\Big)^2\frac{dydt}{t^{3n+1}}\Big)^{1/2}\\
&\lesssim\Big(\int_{t>\frac{r}{2}}\Big(\int_{|z-x|\leq2t}|f_{2}(z)|dz\Big)^2\frac{dt}{t^{2n+1}}\Big)^{1/2}.
\end{align*}
The Fubini theorem and the fact that $|z-x|\geq|z|-|x|\geq\frac{1}{2}|z|$ lead to the following result
\begin{align*}
\mathcal{G}_{\alpha}(f_2)(x)&\leq\Big(\int_{\mathbb{R}^n}\Big(\int_{t>\frac{|z-x|}{2}}\frac{dt}{t^{2n+1}}\Big)^2\Big)^{1/2}|f_{2}(z)|dz\\
&\lesssim\int_{|z|>2r}\frac{|f(z)|}{|x-z|^n}dz\\
&\lesssim\int_{|z|>2r}\frac{|f(z)|}{|z|^n}dz  \quad\quad(|z|\thickapprox|x-z|)\\
&\lesssim\int_{|z|>2r}|f(z)|\int_{|z|}^{\infty}\frac{1}{t^{n+1}}dtdz\\
&\lesssim\int_{2r}^{\infty}\int_{2r<|z|<t}|f(z)|\frac{1}{t^{n+1}}dzdt\\
&\lesssim\int_{2r}^{\infty}t^{-n/q-1}\|f\|_{L^q(B(0,t))}dt.
\end{align*}
Thus, we have
$$I_2=\|\mathcal{G}_{\alpha}(f_2)\|_{L^q(B)}\lesssim r^{n/q}\int_{2r}^{\infty}t^{-n/q-1}\|f\|_{L^q(B(0,t))}dt.$$
Combining the estimates $I_1$ and $I_2$, the proof of Lemma \ref{le3.1} is completed.
\end{proof}

\begin{proof}[Proof of Lemma \ref{le3.2}]
For $0<\alpha\leq1$ and $\beta\geq1$, we know that
$$\mathcal{G}_{\alpha,\beta}(f)(x)\leq{\beta}^{3+\frac{2\alpha}{n}}\mathcal{G}_{\alpha}(f)(x).$$
Now, we set $\beta=2^{j}>1$, which, together with Lemma \ref{le3.1}, gives that the desired result.
\end{proof}

\begin{proof}[Proof of Lemma \ref{le3.3}]
Let $1<q<\infty$, $0<\alpha\leq1$, $b\in{CBMO}^{p,\lambda}(\mathbb{R}^n)$, $0<\lambda<1/n$, $\frac{1}{q}=\frac{1}{q_1}+\frac{1}{p}$.
For any $r>0$, set $B=B(0,r)$ and $2B=B(0,2r)$, we write
$$f(x)=f(x){\chi}_{2B}(x)+f(x){\chi}_{(2B)^{c}}(x):=f_1(x)+f_2(x),$$
and
\begin{align*}
[b,\mathcal{G}_{\alpha}](f)(x)&=[b-b_B,\mathcal{G}_{\alpha}](f)(x)\\
&\leq(b(x)-b_B)\mathcal{G}_{\alpha}(f_1)(x)+\mathcal{G}_{\alpha}((b-b_B)f_1)(x)\\
&\quad+(b(x)-b_B)\mathcal{G}_{\alpha}(f_2)(x)+\mathcal{G}_{\alpha}((b-b_B)f_2)(x).
\end{align*}
Hence, we have
\begin{align*}
&\|[b,\mathcal{G}_{\alpha}](f)\|_{L^q(B(0,r))}\\
&\leq\|(b(x)-b_B)\mathcal{G}_{\alpha}(f_1)\|_{L^q(B(0,r))}+\|\mathcal{G}_{\alpha}((b-b_B)f_1)\|_{L^q(B(0,r))}\\
&\quad+\|(b(x)-b_B)\mathcal{G}_{\alpha}(f_2)\|_{L^q(B(0,r))}+\|\mathcal{G}_{\alpha}((b-b_B)f_2)\|_{L^q(B(0,r))}\\
&:=J_1+J_2+J_3+J_4.
\end{align*}
For $J_1$, by H\"{o}lder's inequality and the boundedness of $\mathcal{G}_{\alpha}$, we get
\begin{align*}
J_1&=\Big(\int_{B}|b(x)-b_B|^q|\mathcal{G}_{\alpha}(f_1)(x)|^qdx\Big)^{1/q}\\
&\leq\Big(\int_{B}|b(x)-b_B|^{p}dx\Big)^{1/p}\Big(\int_{B}|\mathcal{G}_{\alpha}(f_1)(x)|^{q_1}dx\Big)^{1/q_1}\\
&\lesssim r^{n/p+n\lambda}\|b\|_{{CBMO}^{p,\lambda}(\mathbb{R}^n)}\|f\|_{L^{q_1}(2B)}\\
&\lesssim r^{n/p+n\lambda+n/q_1}\|b\|_{{CBMO}^{p,\lambda}(\mathbb{R}^n)}\int_{2r}^{\infty}\|f\|_{L^{q_1}(B(0,t))}\frac{dt}{t^{n/q_1+1}}\\
&\lesssim r^{n/q}\|b\|_{{CBMO}^{p,\lambda}(\mathbb{R}^n)}\int_{2r}^{\infty}t^{n\lambda}\|f\|_{L^{q_1}(B(0,t))}\frac{dt}{t^{n/q_1+1}}\\
&\lesssim r^{n/q}\|b\|_{{CBMO}^{p,\lambda}(\mathbb{R}^n)}\int_{2r}^{\infty}t^{n\lambda}\big(1+\ln\frac{t}{r}\big)\|f\|_{L^{q_1}(B(0,t))}\frac{dt}{t^{n/q_1+1}}.
\end{align*}
Similarly, for $J_2$, we also have
\begin{align*}
J_2&=\Big(\int_{B}|\mathcal{G}_{\alpha}((b-b_B)f_1)(x)|^qdx\Big)^{1/q}\\
&\lesssim\Big(\int_{2B}|b(x)-b_B|^{p}dx\Big)^{1/p}\Big(\int_{2B}|f(x)|^{q_1}dx\Big)^{1/q_1}\\
&\lesssim\Big(\int_{2B}|b(x)-b_{2B}|^{p}dx\Big)^{1/p}\Big(\int_{2B}|f(x)|^{q_1}dx\Big)^{1/q_1}\\
&\quad+|2B|^{1/p}|b_B-b_{2B}|\Big(\int_{2B}|f(x)|^{q_1}dx\Big)^{1/q_1}\\
&\lesssim\Big(\int_{2B}|b(x)-b_{2B}|^{p}dx\Big)^{1/p}\|f\|_{L^{q_1}(2B)}\\
&\lesssim r^{n/p+n\lambda}\|b\|_{{CBMO}^{p,\lambda}(\mathbb{R}^n)}\|f\|_{L^{q_1}(2B)}\\
&\lesssim r^{n/q}\|b\|_{{CBMO}^{p,\lambda}(\mathbb{R}^n)}\int_{2r}^{\infty}t^{n\lambda}\big(1+\ln\frac{t}{r}\big)\|f\|_{L^{q_1}(B(0,t))}\frac{dt}{t^{n/q_1+1}}.
\end{align*}
For $J_3$, by Lemma \ref{le3.1}, we know that
$$\mathcal{G}_{\alpha}(f_2)(x)\lesssim\int_{2r}^{\infty}t^{-n/{q_1}-1}\|f\|_{L^{q_1}(B(0,t))}dt,$$
which, together with H\"{o}lder's inequality, implies that
\begin{align*}
J_3&\lesssim\Big(\int_{B}|b(x)-b_B|^{q}dx\Big)^{1/q}\int_{2r}^{\infty}\|f\|_{L^{q_1}(B(0,t))}\frac{dt}{t^{n/q_1+1}}\\
&\leq r^{n/p+n\lambda+n/q_1}\|b\|_{{CBMO}^{p,\lambda}(\mathbb{R}^n)}\int_{2r}^{\infty}\|f\|_{L^{q_1}(B(0,t))}\frac{dt}{t^{n/q_1+1}}\\
&\lesssim r^{n/q}\|b\|_{{CBMO}^{p,\lambda}(\mathbb{R}^n)}\int_{2r}^{\infty}t^{n\lambda}\big(1+\ln\frac{t}{r}\big)\|f\|_{L^{q_1}(B(0,t))}\frac{dt}{t^{n/q_1+1}}.
\end{align*}
For $J_4$, since $|y-x|<t$, it follows that $|x-z|<2t$. By the Minkowski inequality, we have
\begin{align*}
\mathcal{G}_{\alpha}((b-b_B)f_2)(x)&\lesssim\Big(\int\int_{\Gamma(x)}\Big(\int_{|x-z|<2t}|b(z)-b_B||f_{2}(z)|dz\Big)^2\frac{dydt}{t^{3n+1}}\Big)^{1/2}\\
&\leq\Big(\int_{0}^{\infty}\Big(\int_{|x-z|<2t}|b(z)-b_B||f_{2}(z)|dz\Big)^2\frac{dt}{t^{2n+1}}\Big)^{1/2}\\
&\lesssim\int_{|z|>2r}|b(z)-b_B||f_{2}(z)|\frac{1}{|x-z|^n}dz.
\end{align*}
Note that $|z-x|\geq\frac{1}{2}|z|$, by applying the Fubini theorem, we get
\begin{align*}
J_4&\lesssim\Big(\int_{B(0,r)}\Big|\int_{|z|>2r}|b(z)-b_B||f_{2}(z)|\frac{1}{|x-z|^n}dz\Big|^{q}dx\Big)^{1/q}\\
&\lesssim r^{n/q}\int_{|z|>2r}|b(z)-b_B||f_{2}(z)|\frac{1}{|z|^n}dz\\
&\lesssim r^{n/q}\int_{|z|>2r}|b(z)-b_B||f(z)|\int_{|z|}^{\infty}\frac{1}{t^{n+1}}dtdz\\
&\leq r^{n/q}\int_{2r}^{\infty}\int_{B(0,t)}|b(z)-b_B||f(z)|dz\frac{1}{t^{n+1}}dt\\
&\leq r^{n/q}\int_{2r}^{\infty}\int_{B(0,t)}|b(z)-b_{B(0,t)}||f(z)|dz\frac{1}{t^{n+1}}dt\\
&\quad + r^{n/q}\int_{2r}^{\infty}\int_{B(0,t)}|b_{B(0,t)}-b_B||f(z)|dz\frac{1}{t^{n+1}}dt\\
&:=J_{41}+J_{42}.
\end{align*}

For $J_{41}$, by the H\"{o}lder inequality, we have
\begin{align*}
&\int_{B(0,t)}|b(z)-b_{B(0,t)}||f(z)|dz\\
&\lesssim t^{n(1-1/q)}\Big(\int_{B(0,t)}|b(z)-b_{B(0,t)}|^q|f(z)|^qdz\Big)^{1/q}\\
&\lesssim t^{n(1-1/q)+n/p+n\lambda}\|b\|_{{CBMO}^{p,\lambda}(\mathbb{R}^n)}\|f\|_{L^{q_1}(B(0,t))}.
\end{align*}
Thus,
\begin{align*}
J_{41}&\leq r^{n/q}\|b\|_{{CBMO}^{p,\lambda}(\mathbb{R}^n)}\int_{2r}^{\infty}t^{n(1-1/q)+n/p+n\lambda}\|f\|_{L^{q_1}(B(0,t))}\frac{1}{t^{n+1}}dt\\
&\lesssim r^{n/q}\|b\|_{{CBMO}^{p,\lambda}(\mathbb{R}^n)}\int_{2r}^{\infty}t^{n\lambda}\big(1+\ln\frac{t}{r}\big)\|f\|_{L^{q_1}(B(0,t))}\frac{dt}{t^{n/q_1+1}}.
\end{align*}
For $J_{42}$, we also have
\begin{align*}
J_{42}&=r^{n/q}\int_{2r}^{\infty}|b_{B(0,t)}-b_B|\int_{B(0,t)}|f(z)|dz\frac{1}{t^{n+1}}dt\\
&\leq r^{n/q}\int_{2r}^{\infty}\Big(\frac{1}{|B(0,t)|}\int_{B(0,t)}|b(x)-b_{B(0,r)}|^pdx\Big)^{1/p}\int_{B(0,t)}|f(z)|dz\frac{1}{t^{n+1}}dt\\
&\leq r^{n/q}\|b\|_{{CBMO}^{p,\lambda}(\mathbb{R}^n)}\int_{2r}^{\infty}t^{n\lambda}\big(1+\ln\frac{t}{r}\big)\|f\|_{L^{q_1}(B(0,t))}t^{n(1-1/q_1)}\frac{dt}{t^{n+1}}.\\
&\lesssim r^{n/q}\|b\|_{{CBMO}^{p,\lambda}(\mathbb{R}^n)}\int_{2r}^{\infty}t^{n\lambda}\big(1+\ln\frac{t}{r}\big)\|f\|_{L^{q_1}(B(0,t))}\frac{dt}{t^{n/q_1+1}}.
\end{align*}
Combining all of the above estimates, we finish the proof of Lemma \ref{le3.3}.
\end{proof}

Now we are in a position to prove Theorems \ref{th3.1}-\ref{th3.4}.
\begin{proof}[Proof of Theorem \ref{th3.1}]
The method of the proof is standard, by Lemma \ref{le3.1} and a change of variables $t=s^{-\frac{q}{n}}$, we obtain that
\begin{align*}
\|\mathcal{G}_{\alpha}(f)\|_{\mathcal{B}^{q,\varphi_2}(\mathbb{R}^n)}&\lesssim\sup\limits_{r>0}\frac{1}{\varphi_2(r)}\frac{1}{|B(0,r)|^{1/q}} r^{n/q}\int_{2r}^{\infty}\|f\|_{L^q(B(0,t))}\frac{dt}{t^{n/q+1}}\\
&\lesssim\sup\limits_{r>0}\frac{1}{\varphi_2(r)}\int_{0}^{r^{-\frac{n}{q}}}\|f\|_{L^q(B(0,s^{-\frac{q}{n}}))}ds\\
&\lesssim\sup\limits_{r>0}\frac{1}{\varphi_2(r^{-\frac{q}{n}})}\int_{0}^{r}\|f\|_{L^q(B(0,s^{-\frac{q}{n}}))}ds\\
&=\sup\limits_{r>0}\frac{r}{\varphi_2(r^{-\frac{q}{n}})}\frac{1}{r}\int_{0}^{r}\|f\|_{L^q(B(0,s^{-\frac{q}{n}}))}ds.
\end{align*}
If we set
$$\omega(t)={\varphi_2(t^{-\frac{q}{n}})^{-1}t}, \quad \nu(t)={\varphi_1(t^{-\frac{q}{n}})^{-1}t},$$
since the pair $(\varphi_1,\varphi_2)$ satisfies the following condition
$$\int_{r}^{\infty}\frac{ess\inf_{t<\tau<\infty}\varphi_1(\tau){\tau}^{n/q}}{{t^{n/q+1}}}dt\lesssim\varphi_2(r),$$
it follows that
$$\sup\limits_{t>0}\frac{\omega(t)}{t}\int_{0}^{t}\frac{dr}{ess\sup_{0<s<t}\nu(s)}<\infty.$$
This leads to the following inequality (see \cite{C})
$$ess\sup_{t>0}\omega(t)\mathcal{H}g(t)\lesssim ess\sup_{t>0}\nu(t)g(t)$$
holds for all nonnegative and non-increasing functions $g$ on $(0,\infty)$, where $\mathcal{H}$ is the classical Hardy operator, that is,
$$\mathcal{H}g(t)=\frac{1}{t}\int_{0}^{t}g(r)dr.$$
Therefore, let $g(t)=\|f\|_{L^q(B(0,t^{-\frac{q}{n}}))}$, we have
\begin{align*}
\|\mathcal{G}_{\alpha}(f)\|_{\mathcal{B}^{q,\varphi_2}(\mathbb{R}^n)}
&\lesssim\sup\limits_{r>0}\frac{r}{\varphi_1(r^{-\frac{q}{n}})}\|f\|_{L^q(B(0,r^{-\frac{q}{n}}))}\leq\|f\|_{\mathcal{B}^{q,\varphi_1}(\mathbb{R}^n)}.
\end{align*}
The proof of Theorem \ref{th3.1} is completed.
\end{proof}
\begin{proof}[Proof of Theorem \ref{th3.2}] It is easy to see that the following fact
\begin{align*}
&\Big(g_{\tilde{\lambda},\alpha}^{*}(f)(x)\Big)^2=\int_{0}^{\infty}\int_{|x-y|<t}\Big(\frac{t}{t+|x-y|}\Big)^{n\tilde{\lambda}}\big(\mathcal{A}_{\alpha}f(t,y)\big)^2\frac{dydt}{t^{n+1}}\\
&\quad +\int_{0}^{\infty}\int_{|x-y|\geq t}\Big(\frac{t}{t+|x-y|}\Big)^{n\tilde{\lambda}}\big(\mathcal{A}_{\alpha}f(t,y)\big)^2\frac{dydt}{t^{n+1}}\\
&\leq\Big(\mathcal{G}_{\alpha}(f)(x)\Big)^2+\sum\limits_{j=1}^{\infty}\int_{0}^{\infty}\int_{2^{j-1}t\leq|x-y|<2^{j} t}\Big(\frac{t}{t+|x-y|}\Big)^{n\tilde{\lambda}}\big(\mathcal{A}_{\alpha}f(t,y)\big)^2\frac{dydt}{t^{n+1}}\\
&\lesssim\Big(\mathcal{G}_{\alpha}(f)(x)\Big)^2+\sum\limits_{j=1}^{\infty}\int_{0}^{\infty}\int_{2^{j-1}t\leq|x-y|<2^{j} t}2^{-jn\tilde{\lambda}}\big(\mathcal{A}_{\alpha}f(t,y)\big)^2\frac{dydt}{t^{n+1}}\\
&\leq\Big(\mathcal{G}_{\alpha}(f)(x)\Big)^2+\sum\limits_{j=1}^{\infty}2^{-jn\tilde{\lambda}}\int_{0}^{\infty}\int_{|x-y|<2^{j} t}\big(\mathcal{A}_{\alpha}f(t,y)\big)^2\frac{dydt}{t^{n+1}}\\
&\leq\Big(\mathcal{G}_{\alpha}(f)(x)\Big)^2+\sum\limits_{j=1}^{\infty}2^{-jn\tilde{\lambda}}\Big(\mathcal{G}_{\alpha,2^{j}}(f)(x)\Big)^2.
\end{align*}
Thus,
$$\|g_{\tilde{\lambda},\alpha}^{*}(f)\|_{\mathcal{B}^{q,\varphi_2}(\mathbb{R}^n)}\leq\|\mathcal{G}_{\alpha}(f)\|_{\mathcal{B}^{q,\varphi_2}(\mathbb{R}^n)}
+\sum\limits_{j=1}^{\infty}2^{\frac{-jn\tilde{\lambda}}{2}}\|\mathcal{G}_{\alpha,2^{j}}(f)\|_{\mathcal{B}^{q,\varphi_2}(\mathbb{R}^n)}.$$
Theorem \ref{th3.1} tells us the fact that
$$\|\mathcal{G}_{\alpha}(f)\|_{\mathcal{B}^{q,\varphi_2}(\mathbb{R}^n)}\lesssim\|f\|_{\mathcal{B}^{q,\varphi_1}(\mathbb{R}^n)}.$$
Therefore, to obtain the proof of Theorem \ref{th3.2}, it suffices to show that
$$\sum\limits_{j=1}^{\infty}2^{-jn\tilde{\lambda}}\|\mathcal{G}_{\alpha,2^{j}}(f)\|_{\mathcal{B}^{q,\varphi_2}(\mathbb{R}^n)}\lesssim\|f\|_{\mathcal{B}^{q,\varphi_1}(\mathbb{R}^n)},\quad\text{for $\tilde{\lambda}>3+\frac{2\alpha}{n}$}.$$
In fact, by Lemma \ref{le3.2} and using the similar to the proof of Theorem \ref{th3.1}, for any $\tilde{\lambda}>3+\frac{2\alpha}{n}$, we get that
\begin{align*}
\sum\limits_{j=1}^{\infty}2^{-jn\tilde{\lambda}}\|\mathcal{G}_{\alpha,2^{j}}(f)\|_{\mathcal{B}^{q,\varphi_2}(\mathbb{R}^n)}&\lesssim\sum
\limits_{j=1}^{\infty}2^{\frac{-jn\tilde{\lambda}}{2}}2^{j(3+\frac{2\alpha}{n})}\\
&\quad\times\sup\limits_{r>0}\frac{1}{\varphi_2(r^{-\frac{q}{n}})}\int_{0}^{r}\|f\|_{L^q(B(0,s^{-\frac{q}{n}}))}ds\\
&\lesssim\|f\|_{\mathcal{B}^{q,\varphi_1}(\mathbb{R}^n)}.
\end{align*}
The proof of Theorem \ref{th3.2} is finished.
\end{proof}

\begin{proof}[Proof of Theorem \ref{th3.3}]
By Lemma \ref{le3.3}, the proof of Theorem \ref{th3.3} is only a repetition of Theorem \ref{th3.1}. Therefore we omit the details here.
\end{proof}
\begin{proof}[Proof of Theorem \ref{th3.4}]
Simialr to the proof of Theorem \ref{th3.2}, together with Theorem \ref{th3.3}, we derive that
\begin{align*}
&\|[b,g_{\tilde{\lambda},\alpha}^{*}](f)\|_{\mathcal{B}^{q,\varphi_2}(\mathbb{R}^n)}\\
&\leq\|[b,\mathcal{G}_{\alpha}](f)\|_{\mathcal{B}^{q,\varphi_2}(\mathbb{R}^n)}
+\sum\limits_{j=1}^{\infty}2^{\frac{-jn\tilde{\lambda}}{2}}\|[b,\mathcal{G}_{\alpha,2^{j}}](f)\|_{\mathcal{B}^{q,\varphi_2}(\mathbb{R}^n)}\\
&\lesssim\|b\|_{{CBMO}^{p,\lambda}(\mathbb{R}^n)}\|f\|_{\mathcal{B}^{q_1,\varphi_1}(\mathbb{R}^n)}.
\end{align*}
The proof of Theorem \ref{th3.4} is completed.
\end{proof}

\end{document}